

\documentclass[a4paper,12pt]{article}
\pdfoutput=1    

\usepackage[sfdefault=cmbr,OMLmathsans]{isomath}  
\usepackage{upgreek}  

\usepackage{tensor}  
\usepackage{mathbbol}  %



\usepackage{amsmath}
\usepackage{amsthm}
\usepackage{graphicx}      
\usepackage{float}
\usepackage{pst-all}

\newcommand{\figures}{./figures}
\newcommand{\PsTricks}{./PsTricks}

\usepackage{subfigure}

\theoremstyle{plain}
\newtheorem{theorem}{Theorem}[section]

\newtheorem{lemma}[theorem]{Lemma}
\newtheorem{proposition}[theorem]{Proposition}

\theoremstyle{definition}
\newtheorem{definition}[theorem]{Definition}

\theoremstyle{remark}

\usepackage[colorinlistoftodos,disable]{todonotes}
\usepackage{pst-all}
\usepackage{wrapfig}

\usepackage{array}

\newcommand{\vek}[1]{\mathchoice{\displaystyle\boldsymbol{#1}}
{\textstyle\boldsymbol{#1}}{\scriptstyle\boldsymbol{#1}}
{\scriptscriptstyle\boldsymbol{#1}}}

\renewcommand{\vec}[1]{{\vek #1}}
\newcommand{\ten}[1]{\tensor[]{ #1}{}}
\newcommand \flux[1]{\vek u_#1}		
\def\norm#1{  \left\| #1 \right\|}
\def \pot{\mathcal P}

\def\norm#1{  \left\| #1 \right\|}
\def\scalar#1#2{ \left< #1 , #2 \right>}

\begin{document}



\title{Convergence of explicitely coupled Simulation Tools (Cosimulations)}

\author{Thilo Moshagen$^{\rm a}$ $^{\ast}$\thanks{$^\ast$Corresponding author. Email: t.moshagen@tu-bs.de
\vspace{6pt}} \\ $^{a}${\em{Institut f\"{u}r Wiss. Rechnen,  TU Braunschweig, Germany}};
}
%

\maketitle

\begin{abstract}                
In engineering, it is a common desire to couple existing simulation tools together into one
big system by passing information from subsystems as parameters into the subsystems under
influence. As executed at fixed time points, this data exchange gives the global method a
strong explicit component. Globally, such an explicit cosimulation schemes exchange time step can be seen as a step of an one-step method which is explicit in some solution components. Exploiting this structure, we give a convergence proof for such schemes.\\
As flows of conserved quantities are passed across subsystem
boundaries, it is not ensured that systemwide balances are fulfilled: the system is not solved as
one single equation system. These balance errors can accumulate and make simulation results
inaccurate. Use of higher-order extrapolation in exchanged data can reduce this problem but
cannot solve it.
The remaining balance error has been handled in past work 
 by recontributing  it to the input signal in next
coupling time step, a technique labeled \emph{balance correction methods}.
Convergence for that method is proven.\\ 
Further, a proof for the lack of stability of such methods is given for cosimulation schemes with and without balance correction.
\end{abstract}
\paragraph{Keywords:}
Cosimulation, coupled problems, stability, convergence,  balance correction, extrapolation of signals,  
\section{Introduction}
\label{sec-1}

Engineers are increasingly relying on numerical simulation techniques. Models and 
simulation tools for various physical problems have come into existence in the past 
decades. The desire to simulate a system that consists of well described and treated 
subsystems by using appropriate solvers for each subsystem and letting them exchange the data that forms the mutual 
influence is immanent. \\ 
The situation usually is described by two coupled differential-algebraic systems $S_1$ and $S_2$ that together form a system $S$:
\begin{align}
S_1: \quad \nonumber \\ 
\dot{\vek x}_ 1 &= \vek f_1(\vek x_1,\vek x_2,\vek z_1, \vek z_2)\\
0 &= \vek g_1(\vek x_1, \vek x_2, \vek z_1, \vek z_2) \\
S_2: \nonumber \quad\\
\dot{\vek x}_2  &= \vek f_2(\vek x_1,\vek x_2,\vek z_1, \vek z_2)\\
0 &= \vek g_2(\vek x_1, \vek x_2, \vek z_1, \vek z_2). 
\end{align}
The $(\vek x_1,\vek x_2)$ are the differential states of $S$, their splitting into $\vek{ x}_i$ determines the subsystems $S_i$ together with the choices of the $\vek{z}_i$.
In \emph{Co-Simulation} the immediate mutual influence of subsystems 
 is  replaced by exchanging data at fixed time points 
 and subsystems
are solved separately and parallely but using the received parameter:
\begin{align}
S_1: \nonumber \quad\\
\dot{\vek x}_ 1 &= \vek f_1(\vek x_1, \vek z_1, \vek u_{12})\label{split1} \\ 
0 &= \vek g_1(\vek x_1,  \vek z_1, \vek u_{12}) \label{splitAlgebraic1} \\ 
S_2:\quad \nonumber \\
\dot{\vek x}_2 &= \vek f_2(\vek x_2,\vek z_2, \vek u_{21})\\
0 &= \vek g_2(\vek x_2, \vek z_2,  \vek u_{21})  \label{splitLast}
\end{align}
where $\vek u_i$ are given by coupling conditions that have to be fulfilled at exchange times $T_k$ 
\begin{align}
\vek 0 &=\vek h_{21}(\vek x_1, \vek z_1, \vek u_{21})\label{coupling1}  \\ 
\vek 0 &=\vek h_{12}(\vek x_2, \vek z_2, \vek u_{12})  \label{coupling2}
\end{align}
and are not dependent on subsystem $i$'s states any more, so are mere parameters between exchange time steps.\\
Full row rank of $d_{\vek z_i} \vek g_i$ can be assumed, such that the differential-algebraic systems are of index 1. This description of the setting is widespread (\cite{ArnoldGuenther2001}). 
With the $\vek h_{ij}$ being solved for $\vek u_{ij}$ inside the $S_j$ (let solvability be given), for systems with more than two subsystems it is more convenient to write  output variable $\vec y_j$  
and now redefine $\vek u_{ij}$ as the input of $S_i$, consisting of some components of the outputs $\vek y_j$ 
\cite{ArnoldClaussSchierz2013}. This structure is defined as kind of a standard for connecting simulators for cosimulation  by the \emph{Functional Mockup Interface} Standard \cite{FMI}. It defines clearly what information a subsystems implementation provides. \\ 
In 
Co-Simulation the variables establishing the mutual influence of subsystems 
 are exchanged at fixed time points.
 This results in continuous variables being approximated by piecewise constant
  extrapolation, as shown in the following picture: \\
\begin{figure}[h!tb]
\begin{center}
\resizebox{0.5 \textwidth}{!}{
\includegraphics{\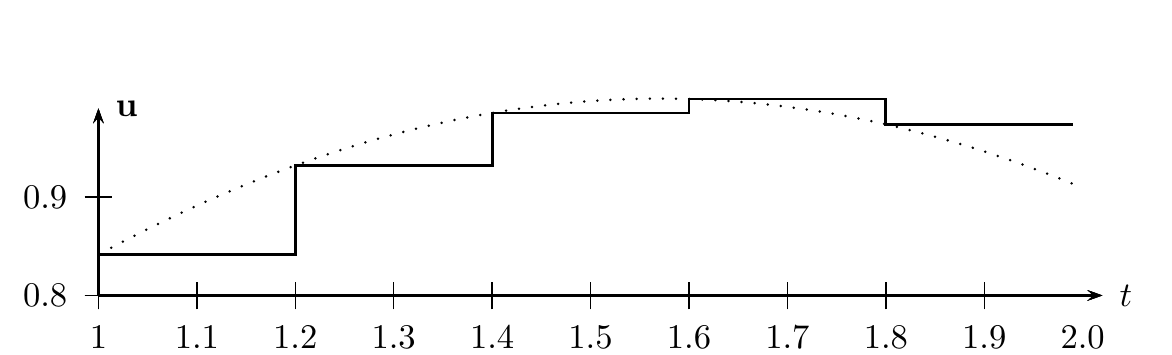}
}\end{center}
\caption{Constant extrapolation of an input signal}
\end{figure}

If one does not want to iterate on those inputs by restarting the simulations using the newly calculated inputs, one just proceeds to the next timestep. \\
This gives the calculations an explicite component, the mutual influence is now not immediate any more, inducing the typical stability problems, besides the
  approximation errors.\\
 \begin{figure}[h!tb]
\begin{center}
\resizebox{0.7\textwidth}{!}{
\includegraphics{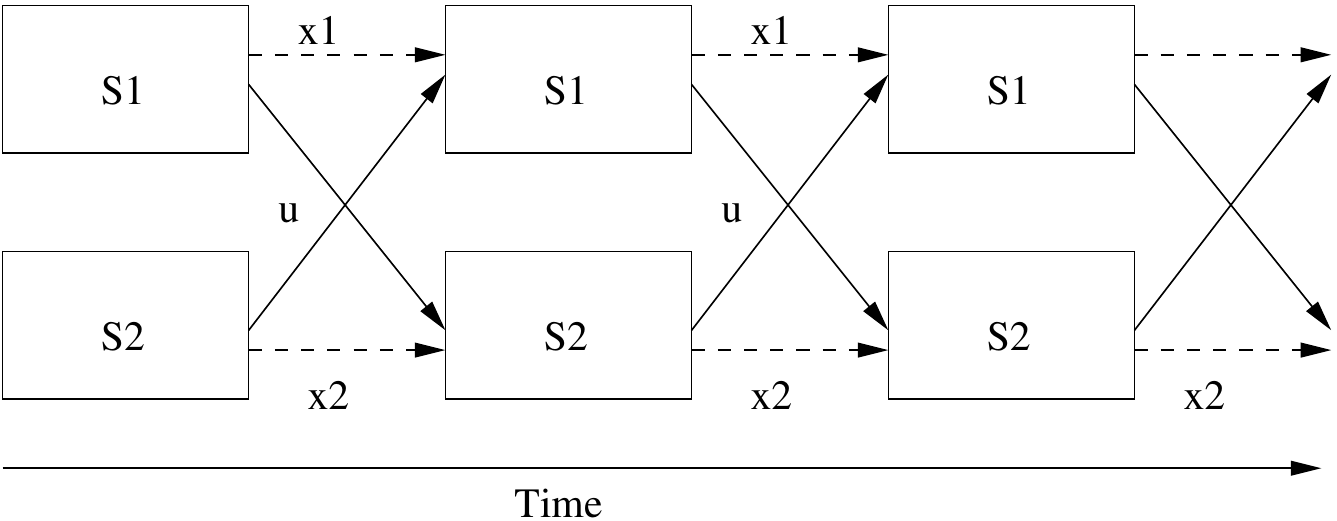}
}\end{center}
\caption{Explicit Cosimulation Scheme}
\end{figure}
But for good reasons, explicit co-simulation is a widely used method: 
It allows to put  separate submodels, for each of which a solver exists, together into one system  and simulate that system by simulating each subsystem with its   specialised solver - examination of mutual influence becomes possible without rewriting everything into one system, and   simulation speed benefits from the parallel calculation of the submodels. Usually it is highly desirable that a simulation scheme does not require repeating of exchange time intervals or iteration, as for many comercial simulation tools this would  already require  too deep intrusion into the subsystems method and too much programming in the coupling algorithm.\\

The following fields of work on explicite co-simulation can be named to be the ones of most interest:
\begin{enumerate}
\item Improvement of the approximation  of the exchanged data will most often improve simulation results \cite{Busch2012}. This  is usually done by \emph{higher-order extrapolation} of exchanged data, as shown in this plot, where  the function plotted with dots is linearly extrapolated:\\ 
\begin{figure}[h!tb]
\begin{center}
\resizebox{0.5 \textwidth}{!}{
\includegraphics{\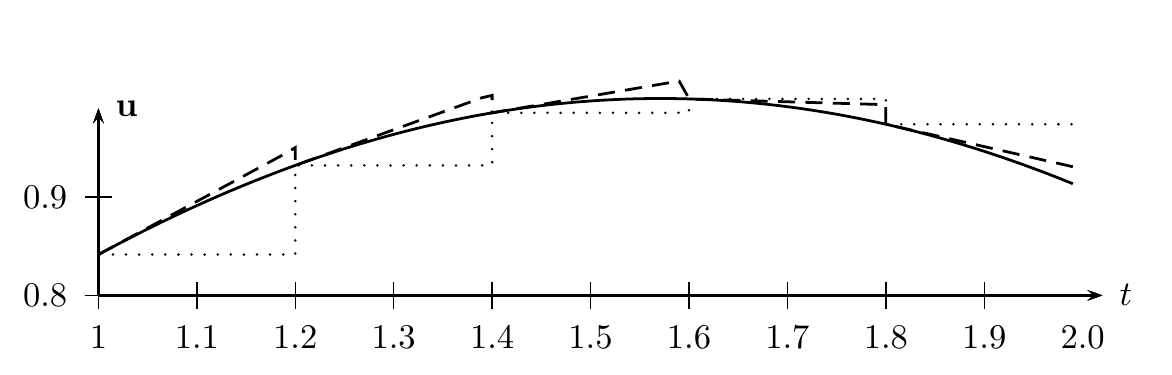}
}\end{center}
\caption{Linear extrapolation of an input signal}
\end{figure}
%

\item When the mutual influence between subsystems consists of flow of conserved quantities like mass or energy, it turns out that the improvement of the approximation of this influence by extrapolation of past data is not sufficient to establish the conservation of those quantities with the necessary accuracy.
The error that arises from the error in exchange adds up over time and
 becomes obvious (and lethal to simulation results many times). 
 In a cooling cycle example (\cite[Section 6.3]{KosselDiss}),  a gain of 1.25\% in coolant mass occurs when simulating a common situation.
\begin{figure}[h!tb]
\begin{center}
\resizebox{0.5 \textwidth}{!}{
\includegraphics{\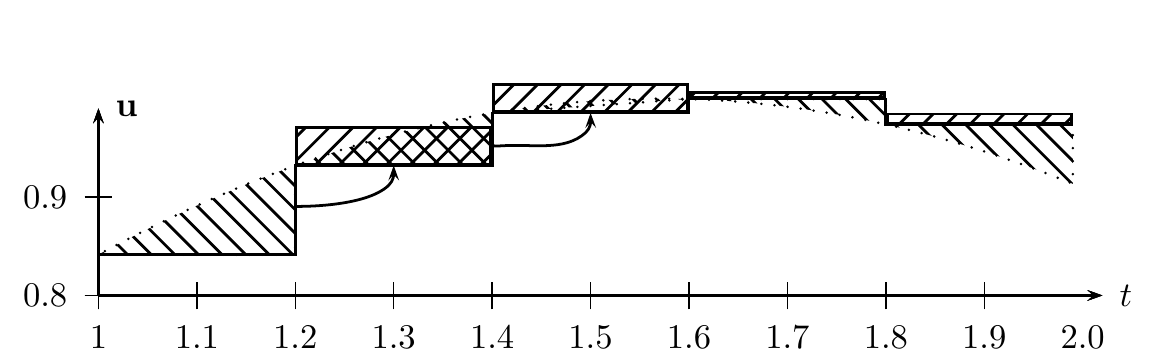}
}
\end{center}
\caption{Constant extrapolation of an input signal, balance error and its recontribution}
\end{figure}
It has been tried  to meet this challenge by   passing the amount of exchanged quantity for the past timestep along with the actual flow on to the receiving system, where then the error that has just been  commited is calculated  and added to the current flow to compensate the past error. For well damped example problems in fluid circles this method has fulfilled the expectations \cite{Scharff12}. It has been labelled \emph{balance correction}. 
\item There is good reason to prevent jumps in exchanged data by \emph{smoothing}. Higher order extrapolation polynomials cannot make extrapolated  data at the end of the exchange timestep match the newly given value.
\end{enumerate}

There have been some examinations of the convergence of iterative simulator coupling schemes, which can be applied to  noniterative coupling as the case where the number of iterations is 0. But
to the best of our knowledge, no proof for the convergence of the explicite simulator coupling scheme has been presented so far, despite its importance for industrial applications. Neither has there been an examination of convergence of balance correction methods.  This is a considerable gap because the balance correction method is a highly heuristically motivated method with a inherent danger: 
It means making an error in 
the exchanged signal $u$      for lowering the accumulated error in the amount of  that quantity -- it increases the $L1$-error to lower the $L2$-error. This should be considered well.  

\section{Some preliminaries}

Always, a time interval or quantities belonging to one are indexed with the index of its right boundary: $\Delta t_i := [t_{i-1},t_i)$. Indexing of times begins with 0.
Big letters refer to time exchange steps, e.g  $\Delta T_k := [T_{k-1},T_k)$ is an interval between exchange times, whereas above interval might denote a subsystems step. Consistently, $k =1,...,N $, and above $i$ ranges from $1$ to $n$ too.
Let $ \operatorname{Ext}(\vec u)_{i}^k(t)$ denote the extrapolation
of the input variable $i$ in interval $k$, 
sometimes referred to as \emph{hold function}. 

\subsection{Choice of exchanged Variables}
The direction in which the variables are exchanged cannot be chosen arbitrarily. Careful consideration is needed to not place contradictorily constraints on the models. If i.e. at one of the model boundaries $S_1$ exchanges the value of a differential state, it must only export and not receive this value -- otherwise this state is turned into a parameter and extrapolated instead of solved, or one would overwrite an integration result and reinitialize the solver at each exchange. The  choice is to receive arguments of the derivative  of the state and send the state value itself \cite{Scharff12}, \cite{KosselDiss}. Often, the physics of coupling is described by a flow driven by a potential, where the flux depends on the potential by $f\sim \nabla_{\vek x} \pot $, or in non-spatial context   $f = \operatorname{const}\left[\pot_{S_2}- \pot_{S_1}\right]$.  Then exchange is commonly implemented in the following way: System $S_1$ passes the value of the flow-determining potential $\pot$ on to $S_2$, whereas the latter calculates the flux $f $ and passes it to $S_1$.

\subsection{Detailed description of Techniques in coupling Simulation Software}
\label{sec-2-1}
After the  brief overview in the introduction on the issues that appear when coupling simulation software and means to treat them was given. Those fields are described more precisely here, giving citations. 

\subsubsection{Determination of consistent Inputs}\label{sec_consistentInput}
As $\vek u_i$ fulfils $\vek 0 =\vek h_i(\vek x_i, \vek z_i, \vek u_i)$, where $\vek z_i$ results from  $\vek 0 =\vek g_i(\vek x_i, \vek z_i, \vek u_j)$, it in general depends on $\vek u_j$. So the coupling equations \eqref{coupling1} \eqref{coupling2} $(\vec h_i)_{i=1..n}=\vek 0$ are a coupled system in fact.
 The $\vek 0 =\vek h_i$ can be solved with respect to the $\vek u_i$ one after the other  only if the directed graph of  influence of outputs on each other contains neither bidirectional dependencies nor loops. Otherwise, the  coupling conditions remain not fully fulfilled, or one solves them iteratively. Such
  methods are the Interface-Jacobian based methods,  e.g. given in \cite{KueblerSchiehlen2000},  which determine the $\vec u_i$ with respect to $\vec h_i = 0$ and $\vec g_i = 0$ with the help of a newton solver at exchange times, which means full consistency at those times. But not all  tools simulating the subsystems provide the residual of  $\vek g_i=0$.

\subsubsection{Iterative coupled Methods}
 If restarting the timestep is an option, iterations can be performed and instead of using $\operatorname{Ext}(\vek u_i)$, the time-dependant numerical solution for $\vec u_i$ from the old iteration can be used.
 Schemes are distinguished by the exchange directions of input,
 as the waveform iteration or the Gauss-Seidel scheme, which can exploit
  the directions of influences between the subsystems (\cite{MiekkalaNevanlinna87}, \cite{ArnoldGuenther2001}, \cite{Busch2012}).
  Stability and accuracy, also in terms of balance, such can be augmented, of course at computational cost due to the restarts.\\ 
In \cite{SicklingerBelskyEngelmannElmqvist14},  a Newton method is used to solve the coupling equations  \eqref{coupling1}-\eqref{coupling2}, but different from \cite{KueblerSchiehlen2000} not only $(\vec h_i(\vek x_i(T_k), \vek z_i, \vek u_i))_{i=1..n}=\vek 0$ at the end of the timestep, but   $(\vec h_i(\vek x_i(\vek u_i), \vek z_i, \vek u_i))_{i=1..n}=\vek 0$. This requires repeated solving of the subsystems alone for Jacobian evaluation.
 
\subsubsection{Increasing the extrapolation order of inputs} To improve approximation $  \operatorname{Ext}\vec u^i$ has been calculated as an extrapolation polynomial of past
   $\vec u^{j}$, $j<i$. 
In his dissertation \cite{Busch2012} Busch examines Lagrangian and Hermite extrapolation polynomials for exchanged quantities using a system concatenated from two coupled spring-mass oscillators, thus a linear  second order ODE with four degrees of freedom,  as model problem.

\subsubsection{Investigations of convergence}
Busch examines  the effect of the approximation order in exchanged data on global convergence and judges the stability of the method for his problem, assuming exact integration of each subsystem. 
 Convergence to exact solution in this case is limited to $O(h^2)$ for piecewise constant extrapolation, while it is limited to $O(h^3)$ for degree one Hermite polynomial extrapolation.\\
As Busch examines a system of ordinary differential equations without algebraic parts and given explicitely, he feels no need to consider smoothing. 
Balance errors that occur in his problem remain undiscussed. 
Besides the aforementioned examination \cite{Busch2012} which is limited to linear problems,  in \cite{ArnoldGuenther2001}, similarly in \cite{MiekkalaNevanlinna87}, there is a more general examination, treating iterative coupling schemes. Mainly, a fixed point argument is used here.

\subsubsection{Smooth switching between old and new input }
The smoothing of exchanged data is originally motivated by the need to  provide a starting value close to the solution during searching for solutions of the algebraic  part of the differential-algebraic equation system. 
\\
Another reason for smoothing the inputs can be that it enables the calculation of derivatives resp. difference quotients from them. Although this is unfortunate, such needs sometimes occur in practice.\\
In  \cite{KosselDiss}, two halves  of a cooling cycle are co-simulated, the mass flow $\flux{m}$ being exchanged, so a component of $\vek u$.  The smooth input $\overline{  \vek{ u}}_{i}^{j}$ is concatenated as a convex sum of $\operatorname{Ext}{ \vek{ u}}_{i}^{j-1}$ and $\operatorname{Ext} \vek{ u}_{i}^{j}$ weighted
with a sufficiently smooth, here degree 5 polynomial, function switching from 0 at $T_{j-1}$ to 1 at $T_{j}$.
Besides from stabilizing the Newton solver as inteded, Kossels work gives an impression on balance errors: During some simulations balance error of about 1.25\% in coolant mass is observed.\\
It is obvious
that by this smoothed switching 
the balance error
 $\Delta E_{i}^j$ possibly increases compared to a
unsmoothed switching procedure.\\ 
Another work that considers smoothing is the paper \cite{WellsHasanLucas}.

\subsubsection{Balance Correction}\label{sec:BC}
Classical balance errors of extrapolations is  $\Delta E_{i}^k = 
\int_{T_{k-1}}^{T_k}\vek u_{i} - \overline{  (\vec u)}_{i}^k(t) dt$ is 
where $\vek u_{i}$ is a flux of a conserved quantity, but it is defined for arbitrary quantities. 
 Negative and positive contributions from positive and negative intervals partly compensate each other, and 
as in a typical simulation the system often ends up in a stationary state similar to the one at begin, thus graphs of quantities and their derivatives tend to end at values where they started, the conserved quantities balance error at the end of a simulation may be small. This does not imply it is small during all intervals of the simulation. As a typical such simulation situation think of an automotive driving cycle (\cite{KosselDiss}): Using piecewise constant extrapolation of $f$, there  is a gap in mass after the phase of rising system velocity has passed - this loss remains uncompensated during the (significant) phase of elevated speed.\\
Scharff, motivated by the loss in balance stated in \cite{KosselDiss} as a collateral result, proposed in \cite{Scharff12} that the errors 
\begin{equation}\label{eq:BC}
\Delta E_i^{j} = \int_{T_{j-1}}^{T_j}\vek u_i dt  - \int_{T_{j-1}}^{T_j} \overline{ \vek u}_i dt
\end{equation} 
are
added in the time step $j+1$.  
The correction that is applied in  the $j$-th interval is then
$\Delta E_i^{j-1}\phi_j (t)$,
where   the function $\phi_j$ is scaled such that its integral is 1 and  may be constant but when one wants to  preserve smoothness, it is a smooth function smoothly vanishing
at the boundaries of the time interval $j$. 
In spite of the
lack of strictness and the increase of errors in derivatives of the
exchanged quantities, this method enables coupling of simulations that
would be impossible without - Kossels example was recalculated successfully using balance correction. 
Balance correction methods were applied to  nonconserved quantities  in \cite{ScharffMoshagen2017}, as even such quantities have some conservation properties in space and time, and examination will go on here. The method even improved the convergence for  a linear  such problem.\\

\subsubsection{Reducing exchange induced derivations in input signals}
Split systems may have parts that are prone to be excited by quick changes, implying high derivatives, in input signals.
An example for such a system is a high-frequency spring-mass system sitting on a slowly moving ground whose displacement is given to the spring as input. In such settings it is essential to avoid the contribution of additional frequencies to the signal that the especially the balance corrected coupling method does. In \cite{ScharffMoshagen2017}, it was suggested to spread recontribution of balance correction contributions over more than one interval.

\subsection{Aim of this work}

Most part of our work can be understood as a contribution to Arnold and 
Guenthers theoretical examination and convergence results.
\cite{ArnoldGuenther2001}, but putting their result about convergence \cite[Th. 2.3]{ArnoldGuenther2001} into the context of the standard explicit cosimulation scheme from Section \ref{sec:scopeNotation} 
that 
is paying respect to practical issues: To the need to run  subsystems  simultaneously by  applying a  Jacobi instead of the Gauss-Seidel scheme there, and to the need not to be intrusive into subsystems methods by avoiding iteration.
Our convergence results will turn out to be consistent to those presented in \cite{ArnoldGuenther2001}, but to require simpler derivation as no iterations occur and offer insight into subsystem methods errors effect.  \\

\section{Convergence and stability for One-step methods on subsystems } \label{convergenceAndStability}
Cosimulation schemes without iteration (repeating timesteps) can be regarded as mixed explicit-implicit methods.\\
We thus introduce some notations and facts from  ODE solving first, mostly the way it is done in \cite{DeuflhardBornemann94}, before we consider cosimulation methods.

\subsection{Scope and notation}\label{sec:scopeNotation}
The ODE approach that is applied here covers the setting introduced by equations \eqref{split1} -  \eqref{splitLast}:  As $\partial_{\vec u} \vec h $ has full rank, equations  
\eqref{coupling1} and \eqref{coupling2}
 can be solved by $\vec u_i(\vec x_i)=\vec h_i^{-1}(\vec x_i)$. Further,  by applying the \emph{state space method}, which is solving the algebraic equations \eqref{splitAlgebraic1} and \eqref{splitLast} at each evaluation of $\dot{ \vec x}$, the problem is shifted to the solving of the ODE 
\begin{align}
S_1: \quad 
\dot{\vec x}_ 1 &= \vec f_1(\vec x_1, \vec g_1^{-1}(\vec x_1, \vec u_2), \vec u_2)\\
S_2:\quad 
\dot{\vec x}_2 &= \vec f_2(\vec x_2, \vec g_2^{-1}( \vec x_2, \vec u_1), \vec u_1)
\end{align}
The cosimulation scheme for two ODE subsystems whose coupling equations \eqref{coupling1} -\eqref{coupling2} shall be given explicitely by $\vec u_i=\vek h_i^{-1}(\vec x_i)$ instead of $\vek 0 =\vek h_i(\vec x_i,\vec u_i)$, reads:
\begin{center}
\begin{tabular}{p{0.27\textwidth} | p{0.27\textwidth}} 
\multicolumn{2}{ c }{General scheme}\tabularnewline
\center{$S_1$} & \center{$S_2$}\tabularnewline
\multicolumn{2}{ c }{System States}\tabularnewline
\center{$\vec x_1  $} 		&\center{$\vec x_2 $} \tabularnewline
\multicolumn{2}{ c }{Inputs}\\
\center{$\vec u_2=\vek h_2^{-1}(\vec x_2)$}	& \center{$\vec u_1=\vek h_1^{-1}(\vec x_1)$}\tabularnewline
\multicolumn{2}{ c }{Equations}\tabularnewline
\center{$\dot {\vec x_1} = \vec f_1(\vec x_1, \operatorname{Ext}(\vec u_2))	$} & \center{$\dot{ \vec x_2} = \vec f_2(\vec x_2, \operatorname{Ext}(\vec u_1))$}	\\
\end{tabular}\\
\end{center}
%
By shifting the evaluation of $\vec u_i=\vek h_i^{-1}(\vec x_i)$ to the receiving subsystem $S_j$, $\vec u_i = \vec x_i$ can be written in the $S_1-S_2$ system (this straightforwardly generalizes to more subsystems)
, and a  notation is achieved that is better suitable for applying techniques from ODE.  
Moreover, for readability 
the subsystems differential states
set of indices shall now be $\mathcal D$, and that of the   input variables, which shall be  differential states of other subsystems, 
shall be $\mathcal I$. Thus e.g. $\vec x_{ \mathcal I}:=\vec x_2$ in first subsystem, and $\vec x = \left[{\vec x}_{\mathcal D},\vec x_{\mathcal I}\right]$ and the ODE governing the subsystem can be written as 
\begin{equation}
\dot{\vec x}_{\mathcal D} = f(t,\left[{\vec x}_{\mathcal D},\vec x_{\mathcal I}\right] ).
\end{equation}
In cosimulation methods, input variables $\vec x_{\mathcal I}$ are extrapolated from past data into the current timestep. This extrapolation is denoted $\operatorname{Ext}(\vec x_{ \mathcal I}) $.\\

\subsection{Consistency}

Let $\left\{t_j\right\}$, $j=1,...n$ be the time steps taken by the subsystems method in one step. It is $h_j = t_{j+1} - t_j$. \\
As in \cite{DeuflhardBornemann94}, the \emph{evolution} $\Phi^{t+\tau, t}x$ of some ODE through $x$ at $t$ until $t+\tau$  is the solution of the IVP $\dot{ \vec x }= f(t,\vec x)$, $\vec x(t)= \vec x$. Similarly, the \emph{discrete evolution} $\Psi^{t+\tau, t}x $ is the numerical solution  of that IVP produced by a given method, $\Psi^{t+h, t} $ thus the evolution on one time step.
 
The following common definitions are used:
\begin{definition}[Consistency]\label{defConsistency}
\begin{equation}
\epsilon(t,\vec x, h) = \Phi^{t+h, t}\vec x - \Psi^{t+h, t}\vec x
\end{equation}
is called the consistency error of a one-step method.
A discrete evolution $\Psi^{t+h, t}x$ has \emph{consistency order} $p$ if
\begin{equation}
\epsilon(t,\vec x, h) = O(h^{p+1}) \qquad \text{for} \quad h\longrightarrow 0.
\end{equation}
\end{definition}
The following is given in brief, indeed it is a consequence of the two consistency criteria $ \Psi^{t,t}\vec x = x$ and  $\frac{d}{d\tau}\Psi^{t + \tau,t}x \mid_{\tau=0}= f(t,\vec x)$.

\begin{lemma}
Let $\Psi^{t + h, t}\vec x $ be differentiable with respect to $h$. Then the following is equivalent:
\begin{itemize}
\item The discrete evolution $\Psi^{t + h, t}\vec x $ is consistent.
\item The discrete evolution can be written as $\Psi^{t + h, t}\vec x = \vec x + h \psi(t,\vec x, h)$ where $\psi$ is called the increment function and is continuous w.r.t $h$.
\item $\epsilon(t,\vec x, h) = o(h)$ for $h\longrightarrow 0$, i.e. the consistency error vanishes near $t$.
\end{itemize}
\end{lemma}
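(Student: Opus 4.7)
The plan is to treat the two consistency criteria stated just before the lemma, namely $\Psi^{t,t}\vec x = \vec x$ and $\frac{d}{d\tau}\Psi^{t+\tau,t}\vec x\mid_{\tau=0} = f(t,\vec x)$, as the working definition of the first bullet and to establish equivalence by a short cycle of implications $(2) \Rightarrow (1) \Rightarrow (3) \Rightarrow (2)$. Since the evolution $\Phi^{t+h,t}\vec x$ of the ODE satisfies these same two criteria (by definition of the flow), all comparisons between $\Phi$ and $\Psi$ reduce to first-order Taylor information at $h = 0$.

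For $(2) \Rightarrow (1)$, I would plug $h = 0$ into the representation $\Psi^{t+h,t}\vec x = \vec x + h\,\psi(t,\vec x, h)$ to read off $\Psi^{t,t}\vec x = \vec x$, then differentiate in $h$ and exploit the continuity of $\psi$ at $h=0$ to obtain $\partial_h\Psi^{t+h,t}\vec x\mid_{h=0} = \psi(t,\vec x, 0)$, which matches $f(t,\vec x)$ precisely when the increment function is the one associated with the method. For the converse $(1) \Rightarrow (2)$, I would use the differentiability hypothesis and set
\begin{equation*}
\psi(t,\vec x, h) := \frac{1}{h}\int_0^h \partial_\tau \Psi^{t+\tau,t}\vec x \, d\tau \quad (h \neq 0), \qquad \psi(t,\vec x, 0) := f(t,\vec x).
\end{equation*}
The fundamental theorem of calculus together with $\Psi^{t,t}\vec x = \vec x$ yields $h\,\psi = \Psi^{t+h,t}\vec x - \vec x$, and continuity at $h = 0$ follows from the mean value theorem applied to the integral representation, using that $\partial_\tau\Psi^{t+\tau,t}\vec x$ is continuous near $\tau = 0$ with limiting value $f(t,\vec x)$.

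The implication $(1) \Rightarrow (3)$ is then essentially automatic: both $\Phi$ and $\Psi$ agree at $h = 0$ and share the same $h$-derivative there, so their difference $\epsilon(t,\vec x, h)$ is a differentiable function of $h$ vanishing to first order at $0$, hence $o(h)$. For $(3) \Rightarrow (1)$ I would rewrite $\Psi^{t+h,t}\vec x = \Phi^{t+h,t}\vec x - \epsilon(t,\vec x,h)$, evaluate at $h = 0$ to recover $\Psi^{t,t}\vec x = \vec x$, and divide by $h$ before passing to the limit, where the assumed differentiability of $\Psi$ guarantees the limit exists and the $o(h)$ term contributes nothing, leaving $f(t,\vec x)$.

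The main obstacle I anticipate is conceptual rather than technical: the phrasing in the lemma leaves the condition $\psi(t,\vec x, 0) = f(t,\vec x)$ implicit in the word \emph{increment function}, so care is needed in $(2) \Rightarrow (1)$ to justify that continuity of $\psi$ plus identification as the method's increment function forces the value $f(t,\vec x)$ at $h = 0$. Everything else is a compact Taylor/continuity argument once differentiability of $\Psi$ in $h$ is in hand.
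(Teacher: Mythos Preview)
Your sketch is correct and would constitute a genuine proof, but there is nothing in the paper to compare it against: the paper does not prove this lemma at all, it simply refers the reader to \cite[Lemma~4.4]{DeuflhardBornemann94}. Your cycle $(1)\Leftrightarrow(2)$ and $(1)\Leftrightarrow(3)$ via first-order Taylor information at $h=0$ is precisely the standard textbook argument one finds there. The one point you flag as a potential obstacle --- that bullet~(2) as stated omits the condition $\psi(t,\vec x,0)=f(t,\vec x)$ --- is a genuine imprecision in the lemma's phrasing rather than in your reasoning; in \cite{DeuflhardBornemann94} that condition is part of what it means to be \emph{the} increment function, and the paper's wording silently leans on that convention.
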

\begin{proof} See \cite[lemma 4.4]{DeuflhardBornemann94}
\end{proof}
\subsection{Convergence}
We call the set of time points between which one step of the method is executed the \emph{grid}: $\Delta:=\{t_i\}$. Each one-step method defines a \emph{grid function} or numerical solution $\vec x_\Delta$ by solving the IVP $\dot{ \vec x }= f(t,\vec x)$, $\vec x(t_i)= \vec x_\Delta(t_i)$ on $t_i, t_{i+1}$, in other words, by the recursion
\begin{align}
\vec x_\Delta(t_0) &= \vec x_0\\
\vec x_\Delta(t_i) &= \Psi^{t_i,t_{i-1}}\vec x_\Delta(t_{i-1}).
\end{align}
\begin{definition}[Convergence]
The \emph{convergence error} of a discrete solution $\vec x_\Delta$ is defined as
\begin{equation}
\epsilon_{\Delta }(t) = \vec x(t)- \vec x_\Delta(t)
\end{equation}
A discrete solution $\vec x_\Delta$ has \emph{convergence order} $p$ if
\begin{equation}
\epsilon_{\Delta }(t) = O(h^p) \qquad \text{for} \quad h\longrightarrow 0.
\end{equation}
\end{definition}

The following result for ODEs is well-known (see, e.g., \cite{DeuflhardBornemann94}, theorem 4.10:)
\begin{theorem}\label{consistencyToConvergence}
Let the consistency error of an evolution  $\Psi^{t + h, t}\vec x = \vec x + h \psi(t,\vec x,h)$ given by an one-step method with $\psi$ Lipschitz-continuous in $\vec x$ satisfy 
\begin{equation}
\epsilon(t,\vec x, h) = O(h^{p+1}) 
\end{equation}
Then for all grids $\Delta$ with sufficiently small width $h$ the evolution defines a numerical solution $\vec x_\Delta$ for the initial value $\vec x_0=\vec x(t_0)$. The numerical solution converges of order $p$ towards $\vec x$, equivalently,
\begin{equation}
\epsilon_{\Delta }(t) = O(h^p). 
\end{equation}
\end{theorem}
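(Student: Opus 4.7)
The plan is to use the standard telescoping argument for one-step methods: separate the global convergence error into a local consistency contribution and a transported error from previous steps, then close the loop via a discrete Gronwall-type estimate.

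First I would fix a grid $\Delta = \{t_0,\dots,t_n\}$ of width $h := \max_i h_i$ with $h_i = t_{i+1}-t_i$, and introduce the pointwise error $\vek e_i := \vek x(t_i) - \vek x_\Delta(t_i)$. The key identity is obtained by inserting $\Psi^{t_{i+1},t_i}\vek x(t_i)$:
\begin{equation}
\vek e_{i+1} = \bigl[\Phi^{t_{i+1},t_i}\vek x(t_i) - \Psi^{t_{i+1},t_i}\vek x(t_i)\bigr] + \bigl[\Psi^{t_{i+1},t_i}\vek x(t_i) - \Psi^{t_{i+1},t_i}\vek x_\Delta(t_i)\bigr].
\end{equation}
The first bracket is exactly the local consistency error $\epsilon(t_i,\vek x(t_i),h_i)$, and by hypothesis it is bounded by $C h_i^{p+1}$ uniformly for $\vek x(t_i)$ on the (compact) trajectory, provided $h$ is small enough. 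The second bracket I would estimate using the representation $\Psi^{t+h,t}\vek x = \vek x + h\psi(t,\vek x,h)$: Lipschitz continuity of $\psi$ in $\vek x$ with constant $L$ immediately gives
\begin{equation}
\norm{\Psi^{t_{i+1},t_i}\vek x(t_i) - \Psi^{t_{i+1},t_i}\vek x_\Delta(t_i)} \le (1+h_i L)\norm{\vek e_i}.
\end{equation}

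Combining the two bounds yields the one-step error recursion $\norm{\vek e_{i+1}} \le (1+h_i L)\norm{\vek e_i} + C h_i^{p+1}$. Starting from $\vek e_0 = 0$ and iterating, one obtains
\begin{equation}
\norm{\vek e_n} \le C h^p \sum_{i=0}^{n-1} h_i \prod_{j=i+1}^{n-1}(1+h_j L) \le C h^p \cdot \frac{e^{L(t_n-t_0)}-1}{L},
\end{equation}
where the estimate $1+h_j L \le e^{h_j L}$ converts the product into an exponential. This is the standard discrete Gronwall step and produces the claimed $\epsilon_\Delta(t) = O(h^p)$. Existence of the numerical solution on the full interval for sufficiently small $h$ follows from the same estimate together with the continuity of $\psi$ in $h$: the iterates stay in a bounded neighbourhood of the exact trajectory where the Lipschitz and consistency bounds are valid.

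I do not expect a hard obstacle here; the only subtlety is that the Lipschitz constant of $\psi$ and the consistency constant $C$ are only available locally around the exact solution, so one has to argue (by a continuation / bootstrap argument on the maximal $i$ for which $\vek x_\Delta(t_i)$ lies in that neighbourhood) that the numerical solution never leaves the region where these constants apply — this is where the assumption of sufficiently small grid width $h$ is really used. Since the full statement is already available in \cite[Thm.~4.10]{DeuflhardBornemann94}, I would simply cite it after sketching the recursion above.
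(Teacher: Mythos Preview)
Your proposal is correct and matches the paper's treatment: the paper does not give its own proof but simply cites \cite[Th.~4.10]{DeuflhardBornemann94}, and the recursion-and-Gronwall argument you sketch is exactly the one from that reference (and is reproduced later in the paper, equations \eqref{recurrence}--\eqref{errBoundingFunction}, when the same technique is adapted to the cosimulation step). Your plan to sketch the recursion and then cite the reference is precisely what is appropriate here.
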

In short, the theorem says that if a method has consistency order $p$, it follows that it is also convergent of order $p$.

\subsection{Convergence of Cosimulation schemes}
\label{convergence}
Now times of data exchange are denounced as $\left\{T_i\right\}$, $i=0,...m$, the exchange step width as $H_{i+1}= T_{i+1} - T_i$, whereas the $\left\{t_j\right\}$, $j=0,...n$  are the time steps taken by the subsystems method, counting beginning again with 0 in each timestep (indexing by macro and local timestep is not needed). It is $h_{j+1} = t_{j+1} - t_i$. For convienience, it is assumed that $H_i=\sum_{j=1}^n h_j$. \\
As in \cite{DeuflhardBornemann94}, the \emph{evolution} $\Phi^{t+\tau, t}x$ of some ODE through $x$ at $t$ until $t+\tau$  is the solution of the IVP $\dot{ \vec x }= f(t,\vec x)$, $\vec x(t_0)= \vec x$. 
\begin{proposition} There exists a consistent and thus convergent cosimulation scheme, given that derivatives for all subsystems are lipschitz-continuous for all subsystems.
\end{proposition}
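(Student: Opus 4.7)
The plan is to construct an explicit cosimulation scheme, identify it as an ordinary one-step method on the combined state $\vec x = [\vec x_{\mathcal D},\vec x_{\mathcal I}]$, and then verify the consistency criterion so that Theorem~\ref{consistencyToConvergence} immediately supplies convergence.

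First I would exhibit a concrete candidate: let the macro exchange step width equal the inner step width, let each subsystem perform a single forward Euler step on its own ODE, and use constant extrapolation of the inputs anchored at the value at $T_k$, i.e.\ $\operatorname{Ext}(\vec x_{\mathcal I})(t) \equiv \vec x_{\mathcal I}(T_k)$ on $\Delta T_{k+1}$. Aggregating the subsystems into a single global state yields the discrete evolution
\[
\Psi^{T+H,T}\vec x \;=\; \vec x + H\,f\bigl(T,\vec x_{\mathcal D},\operatorname{Ext}(\vec x_{\mathcal I})(T)\bigr),
\]
where $f$ stacks the right-hand sides of all subsystems. Since $\operatorname{Ext}(\vec x_{\mathcal I})(T)=\vec x_{\mathcal I}$, the increment function is simply $\psi(T,\vec x,H) = f(T,\vec x)$, trivially continuous in $H$ and, by the standing Lipschitz assumption on the subsystems' derivatives, Lipschitz in $\vec x$.

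Next I would bound the consistency error by a first-order Taylor expansion: the exact evolution satisfies $\Phi^{T+H,T}\vec x = \vec x + H\,f(T,\vec x) + O(H^2)$, while the discrete evolution equals $\vec x + H\,f(T,\vec x)$ exactly, so
\[
\epsilon(T,\vec x,H) \;=\; \Phi^{T+H,T}\vec x - \Psi^{T+H,T}\vec x \;=\; O(H^2).
\]
Thus the constructed scheme has consistency order $p=1$, and Theorem~\ref{consistencyToConvergence} delivers convergence of order one.

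The main conceptual obstacle is to recognise that, although a cosimulation looks like a mixed explicit--implicit procedure, on the combined state it is a proper one-step method whose right-hand side depends on $\operatorname{Ext}$; the key is that at the left endpoint of each macro step any reasonable extrapolation reproduces the current input exactly, so freezing the cross-subsystem coupling introduces only an $O(H)$ perturbation in the vector field and therefore only an $O(H^2)$ local error. Lifting this construction to higher consistency order would require both that the extrapolation's Taylor expansion at $T$ match $\vec x_{\mathcal I}$ to the corresponding order and that each subsystem's inner solver be of matching order, together with a Gronwall-type estimate to transport inner step errors across the macro interval; for the pure existence claim at hand, the order-one construction above is enough.
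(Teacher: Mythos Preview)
Your proof is correct and follows essentially the same route as the paper: take a single forward Euler step per exchange interval with constant extrapolation, observe that this coincides with forward Euler applied to the global system $S$, and invoke Theorem~\ref{consistencyToConvergence}. You spell out the identification $\psi(T,\vec x,H)=f(T,\vec x)$ and the Taylor estimate more explicitly than the paper does, but the argument is the same.
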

 This holds for the following: If Euler-forward scheme with one step  between exchange times $T_i$  is executed, this is equivalent to executing Euler-forward on the whole system $S$ and thus, for Euler-forward being consistent of order 1, this cosimulation method is convergent of order 1 due to theorem \ref{consistencyToConvergence}. \\
 
Motivated by this simple result, we try to transfer the methods used to prove Theorem \ref{consistencyToConvergence} to realistic subsystem methods as convergence should improve with use of higher-order subsystem methods and higher-order extrapolation of exchanged data. The proof also sketches which steps are to be taken:
\begin{enumerate}
\item  Consistency on the subsystems for step $[T_0,T_1)$ is nearly trivially given only  for $[t_0,t_1)\subset [T_0,T_1)$ by the consistency of the subsystems methods. 
For the other subintervals $[t_{i-1},t_i)\subset [T_0,T_1)$ a consistency-like estimate has to be proven --  due to the contributions from extrapolated exchanged data the convergence  on $[T_0,T_1)$ can not be concluded from consistency of subsystem methods using theorem \ref{consistencyToConvergence} in an obvious way.
\item If an estimate on  $[t_{i-1},t_i)$ is given, convergence of subsystems methods on $[T_0,T_1)$ can be proven in a similar way as theorem  \ref{consistencyToConvergence}. This then  means  consistency of a step of the cosimulation method on the whole system $S$ is provided. 
\item If by (2) consistency of a step  $[T_0,T_1)$ of the cosimulation method is provided, again by applying theorem \ref{consistencyToConvergence} to cosimulation steps global convergence of the cosimulation follows.
\end{enumerate}

\begin{figure}
\setlength{\fboxsep}{7pt} 
Standard One-Step :\\
\framebox{\parbox{0.28\textwidth}{ 
\small{split $\epsilon_\Delta$ error of $\vek x_\Delta$
 into consistency error  and propagated error}}}
$\rightarrow$\framebox{\parbox{0.31\textwidth}{\small{by consistency, recurrence estimate  $ \norm{\vec\epsilon_{\Delta}(t_{j+1})} \le   Ch^{p+1} + (1 + h_jL)\norm{\vec\epsilon_{\Delta}(t_{j})}$ emerges}}}
$\Rightarrow$\framebox{\parbox{0.18\textwidth}{\small{convergence}}}\\[10pt]
partly $H$-explicit Method:\\
\framebox{\parbox{0.28\textwidth}{ 
\small{split $\epsilon_\Delta$  on $[t_{j-1},t_j]\subset[T_0,T_1]$
 into four parts}}}
$\rightarrow$\framebox{\parbox{0.31\textwidth}{\small{by $h-H$ consistency-like estimate,  
similar recurrence estimate for $\epsilon_\Delta$ emerges}}}
 $\Rightarrow$\framebox{\parbox{0.18\textwidth}{\small{convergence on $[T_0,T_1]$}}}\\
$\Leftrightarrow$\framebox{\parbox{0.25\textwidth}{\small{consistency of cosim step}}}
$\Rightarrow$\framebox{\parbox{0.31\textwidth}{\small{by consistency, similar recurrence estimate emerges}}}
$\Rightarrow$\framebox{\parbox{0.18\textwidth}{\textbf{convergence}}}\\
\caption{\label{fig:SketchOfConvergence}Sketch of convergence proof. Above, standard proof for one-step methods for comparison.}
\end{figure}
We have to show items (1) and (2). 
\\

\subsubsection{Consistency of Subsystems steps inside $\left[T_0,T_1\right)$}
\label{ConsistencyOfSubsystems}
For readability, we calculate for the first timestep 
\begin{multline}
\epsilon_{\Delta 1}(t_1) = 
\vec x_{\mathcal D}(t_1)- \vec x_{\Delta, \mathcal D}(t_1)
\\= \Phi^{t_1,t_0} (\left[\vec x_{ \mathcal D,0},\vec x_{ \mathcal I,0}\right])
-\Psi^{t_1,t_0}(\left[\vec x_{\Delta, \mathcal D,0}, \operatorname{Ext}(\vec x_{ \mathcal I,0})\right])
\\=  \underbrace{\Phi^{t_1,t_0} (\left[\vec x_{ \mathcal D,0},\vec x_{ \mathcal I,0}\right]) 
-\Psi^{t_1,t_0}(\left[\vec x_{ \mathcal D,0}, \vec x_{ \mathcal I,0})\right])}_I \\
+ \underbrace{\Psi^{t_1,t_0}(\left[\vec x_{ \mathcal D,0}, \vec x_{ \mathcal I,0}\right])  -\Psi^{t_1,t_0}(\left[\vec x_{\Delta, \mathcal D,0}, \operatorname{Ext}(\vec x_{ \mathcal I,0})\right])}_{II} 
\end{multline}
The difference $I$ is of the order consistency of the subsystems method, thus $O(h^{p+1})$.
The difference $II$ 
is
\begin{multline*}
\Psi^{t_1,t_0}(\left[\vec x_{ \mathcal D,0}, \vec x_{ \mathcal I,0}\right])  -\Psi^{t_1,t_0}(\left[\vec x_{\Delta, \mathcal D,0}, \operatorname{Ext}(\vec x_{ \mathcal I,0})\right])
\\= \vec x_{\mathcal D}(t_{0})
+ h\psi\left(t_j,\left[\vec x_{ \mathcal D},  \vec x_{ \mathcal I}\right](t_0), h\right) 
- \left(\vec x_{ \mathcal D}(t_{0}) 
+ h\psi(t_j,\left[\vec x_{  \mathcal D}, \operatorname{Ext}(\vec x_{ \mathcal I})\right](t_0), h)\right)
\end{multline*}
and is estimated using the Lipschitz continuity of $\psi$ and $\vec x_{  \mathcal D, 0}=\vec x_{\Delta, \mathcal D,0}$:
\begin{multline*}
\norm{\cdot} \le hL\norm{\left[\vec x_{ \mathcal D,0},\vec x_{ \mathcal I,0}\right] -  \left[\vec x_{\Delta, \mathcal D,0},\operatorname{Ext}(\vec x_{ \mathcal I,0})\right](t_0)}
\\ \le hL\norm{\vec x_{ \mathcal I,0} -  \operatorname{Ext}(\vec x_{ \mathcal I,0})(t_0)}
\le hLO(h^{P+1})\le LO(h^{P+2}).
\end{multline*}
Remind that capital letters denote exchange parameters: $P$ is the degree of the extrapolation of exchanged data, $H$ the exchange time stepwith.\\


Now consider a subsystem step $j$ inside $[T_0,T_1)$. The grid error is
\begin{multline}\label{splitIntoConsistencyAndPropagation}
\epsilon_{\Delta}(t_{j+1}) 
= \vec x_{\mathcal D}(t_{j+1})- \vec x_{\Delta, \mathcal D}(t_{j+1})\\
= \underbrace{\vec x_{\mathcal D}(t_{j+1})	
-\Psi^{t_{j+1},t_j}(\left[\vec x_{ \mathcal D}, \operatorname{Ext}(\vec x_{ \mathcal I})\right](t_j))}_{=:\vec \epsilon}\\
+ \underbrace{\Psi^{t_{j+1},t_j}(\left[\vec x_{ \mathcal D}, \operatorname{Ext}(\vec x_{ \mathcal I})\right](t_j))
-\Psi^{t_{j+1},t_j}(\left[\vec x_{\Delta, \mathcal D}, \operatorname{Ext}(\vec x_{\Delta, \mathcal I})\right](t_j))}_{=:\vec \epsilon_\text{Prop}}
\end{multline}
Expression $\vec \epsilon$ resembles the usual local cutoff error made by $\Psi$  in timestep $j$, but has additional contributions from extrapolations,   whereas $\vec \epsilon_\text{Prop}$  describes the propagation of previous errors. It is
\begin{multline}%
\vec \epsilon(t_{j+1}) = 
  \vec x_{\mathcal D}(t_{j+1})	
-\Psi^{t_{j+1},t_j}(\left[\vec x_{ \mathcal D}, \operatorname{Ext}(\vec x_{ \mathcal I})\right](t_j))
\\ = \vec x_{\mathcal D}(t_{j})
+ h\psi\left(t_j,\left[\vec x_{ \mathcal D},  \vec x_{ \mathcal I}\right](t_j), h\right) + r(h^{p+1})
\\
- \left(\vec x_{ \mathcal D}(t_{j}) 
+ h\psi(t_j,\left[\vec x_{  \mathcal D}, \operatorname{Ext}(\vec x_{ \mathcal I})\right](t_j), h)\right)
\\= h\left\{ \psi\left(t_j,\left[\vec x_{ \mathcal D}, \vec x_{ \mathcal I}\right](t_j), h\right)
- 
\psi\left(t_j,\left[\vec x_{ \mathcal D}, \operatorname{Ext}(\vec x_{  \mathcal I})\right](t_j), h\right)\right\}\\
+ r(h^{p+1}).
\end{multline}%
Again using the Lipschitz continuity of $\psi$, we get
\begin{equation}\label{cutoff}
\begin{split}
\norm{\epsilon} \le 
hL \norm{\left[\vec x_{ \mathcal D}, \vec x_{  \mathcal I}\right](t_j)	-\left[\vec x_{ \mathcal D}, \operatorname{Ext}(\vec x_{  \mathcal I})\right](t_j)} + \norm{r(h^{p+1})}
\\ = hL \norm{	(\vec x_{  \mathcal I}\ - \operatorname{Ext}(\vec x_{  \mathcal I}))(t_j)} + \norm{r(h^{p+1})} 
\\= hLO((jh)^{P+1}) + \norm{r(h^{p+1})}
\\= j^{P+1}hLO(h^{P+1}) + \norm{r(h^{p+1})}
\\\le n^{P+1}O(h^{P+2})+ \norm{r(h^{p+1})} 
\end{split}
\end{equation}
where $n$ is the number of subsystem steps per exchange step. Using exchange stepsize $H=nh$ and provided that the subsystems step size $h$ is bounded by $H$, $h<H$, and that $p>P$,
\begin{equation}
 \norm{\epsilon} \le O(H^{P+2})
\end{equation}
formally holds.\todo{right place for discussion?} Unfortunately, in  situations of interest $n$ is bigger than one, thus $h$ in general remains unchanged when $H$ is reduced, so strictly seen
\begin{equation}\label{cutoffWorstCase}
 \norm{\epsilon} \le O(H^{0}) 
\end{equation}
for  $H=nh$,  $n>1$ in typical application. Fortunately,  this still states that $\epsilon$ approaches the error of the subsystems methods, and further, this most common situation will turn out to be covered.\todo{where?}

Consistency as required for theorem \ref{consistencyToConvergence} is $\Phi^{t,t+h}\vec x-\Psi^{t,t+h}\vec x=O(h^{p+1})$, so there is no difference between exact, numerical and extrapolated solution at $t$, so $\operatorname{Ext}(\vec x_{ \Delta, \mathcal I}) = \operatorname{Ext}(\vec x_{  \mathcal I})$.
Using this, expression $\vec \epsilon_{Prop}$ of equation \eqref{splitIntoConsistencyAndPropagation} is calculated as
\begin{multline}\label{propagatedError}%
\vec \epsilon_\text{Prop}(t_{j+1}) = \\
  \Psi^{t_{j+1},t_j}(\left[\vec x_{ \mathcal D}, \operatorname{Ext}(\vec x_{ \mathcal I})\right](t_j))
-\Psi^{t_{j+1},t_j}(\left[\vec x_{\Delta, \mathcal D}, \operatorname{Ext}(\vec x_{\Delta, \mathcal I})\right](t_j))
\\ = \vec x_{\mathcal D}(t_{j})
+ h\psi\left(t_j,\left[\vec x_{ \mathcal D},  \operatorname{Ext}(\vec x_{ \mathcal I})\right](t_j), h\right) \\
- \left(\vec x_{\Delta, \mathcal D}(t_{j}) 
+ h\psi(t_j,\left[\vec x_{ \Delta, \mathcal D}, \operatorname{Ext}(\vec x_{ \Delta,\mathcal I})\right](t_j), h)\right)\\
= \vec x_{\mathcal D}(t_{j}) - \vec x_{\Delta, \mathcal D}(t_{j})
\\+ h\left\{ \psi\left(t_j,\left[\vec x_{ \mathcal D}, \operatorname{Ext}(\vec x_{ \mathcal I})\right](t_j), h\right)
- 
\psi\left(t_j,\left[\vec x_{\Delta, \mathcal D}, \operatorname{Ext}(\vec x_{  \mathcal I})\right](t_j), h\right)\right\}
\end{multline}%
Thus, using the Lipschitz continuity of $\psi$ again
\begin{multline}
\label{recurrenceI}
\norm{\vec\epsilon_\text{Prop}(t_{j+1})} \le \norm{\vec x_{\mathcal D}(t_{j}) - \vec x_{\Delta, \mathcal D}(t_{j})}\\
+hL\norm{\left[\vec x_{ \mathcal D}, \operatorname{Ext}(\vec x_{ \mathcal I})\right](t_j) 
- \left[\vec x_{\Delta, \mathcal D}, \operatorname{Ext}(\vec x_{ \mathcal I})\right](t_j)}
\\ \le\norm{\vec x_{\mathcal D}(t_{j}) - \vec x_{\Delta, \mathcal D}(t_{j})}
+hL_\mathcal D \norm{\vec x_{ \mathcal D}(t_j) 
- \vec x_{\Delta, \mathcal D}(t_j)}
\\ = (1 + hL_{ \mathcal D})\norm{\vec\epsilon_{\Delta}(t_{j})}, 
\end{multline}
writing $\vec x_{\mathcal D}(t_{j}) - \vec x_{\Delta, \mathcal D}(t_{j}) = \epsilon_\Delta(t_{j}) $ again. 

\subsubsection{Consistency of a cosimulation step }

First, the situation that exchange  and subsystem stepsize are reduced together, $H_i=ch_i$, is treated, a situation that is occuring  when $H_i=nh_i$ is established, e.g. if $h$ is already reduced by  $H>h$ when $H\longrightarrow 0$  or by reducing subsystems solvers tolerances together with $h$.\\
This situation again splits into the situation when $p=P$ and when $p>P$.
In the first case, 
\begin{equation}\label{cutoffpeqP}
 \norm{\epsilon} \le O(h^{p+1})
\end{equation}
holds, in the second
\begin{equation}\label{cutoffpGreaterP}
 \norm{\epsilon} \le n^{P+1}O(h^{P+2}).
\end{equation}

Equations  \eqref{recurrenceI} and, depending on situation,  \eqref{cutoffWorstCase}, \eqref{cutoffpeqP} or \eqref{cutoffpGreaterP} now give  different  recurrence schemes for the propagated error. 

First, let $H = ch$ be established and $p=P$. In this case \eqref{cutoffpeqP}  
yields the scheme
\begin{equation}\label{recurrencepeqP}
\begin{aligned}
&(i)& \norm{\vec\epsilon_{\Delta}(t_0)} &= & 0 \\
&(ii)&\norm{\vec\epsilon_{\Delta}(t_{j+1})} &\le  &  O(h_j^{p+1}) + (1 + h_jL)\norm{\vec\epsilon_{\Delta}(t_{j})}
\\& & &=&  \tilde C(h^{p+1}) + (1 + h_jL)\norm{\vec\epsilon_{\Delta}(t_{j})}
\end{aligned}
 \end{equation}
 where $\tilde C$ is a constant.\\
The solution of this recurrence scheme is
\begin{equation}\label{errBoundingFunctionpeqP}
\norm{\vec\epsilon_{\Delta}(t_j)} 
\le 	h^{p}\frac{\tilde C}{L}\left(e^{L(t-t_0)}-1\right) \qquad	\forall t\le T_1
\end{equation}
which is shown in the proof of  \ref{consistencyToConvergence}, see \cite[Th. 4.10]{DeuflhardBornemann94}. The reader is also referred to the solving of the next recurrence scheme, where the arguments are repeated. As
\begin{equation}
\left(  e^{L(t_{n}-t_0)}-1\right) = LH +(LH)^2+O(H^3),
\end{equation}
and $H=ch$
\begin{equation}
\norm{\vec\epsilon_{\Delta}(t_{n})} = O(H^{p+1}) 
\end{equation}
 is proven for $p\le P$.\\

Now, let $H=ch$ be established and $p>P$. The recurrence scheme, using \eqref{cutoffpGreaterP}, now reads
\begin{equation}\label{recurrence}
\begin{aligned}
&(i)& \norm{\vec\epsilon_{\Delta}(t_0)} &= & 0 \\
&(ii)&\norm{\vec\epsilon_{\Delta}(t_{j+1})} &\le & j^{P+1}O(h_j^{P+2}) + (1 + h_jL)\norm{\vec\epsilon_{\Delta}(t_{j})}
\\& & &\le &  n^{P+1} Ch_j^{P+2} + (1 + h_jL)\norm{\vec\epsilon_{\Delta}(t_{j})}.
\end{aligned}
 \end{equation}
It is shown that this error is of order $H^{P+1}$ by transferring the arguments from the proof of  \ref{consistencyToConvergence} as given in \cite[Th. 4.10]{DeuflhardBornemann94} to this situation. 
Inductively, 
\begin{equation}\label{errBoundingFunction}
\norm{\vec\epsilon_{\Delta}(t_j)} 
\le 	n^{P+1}h_{\Delta}^{P+1}\frac{C}{L}\left(e^{L(t-t_0)}-1\right) \qquad	\forall t\le T_1
\end{equation}
where $C$ is a constant and $h_\Delta$ the maximum stepwidth in the interval of interest is shown:\\
First, \eqref{errBoundingFunction} holds for $t_0$. Now assume it holds for $t_j$. Then
\begin{align}%
\norm{\vec\epsilon_{\Delta}(t_{j+1})} 
\le& n^{P+1}Ch_j^{P+2} + (1+ h_jL)	n^{P+1} h_\Delta^{P+1}\frac{C}{L}\left(e^{L(t_{j}-t_0)}-1\right)	\\ 
\le&n^{P+1} h_\Delta^{P+1}\frac{C}{L}\left( h_jL+ (1+ h_jL)	\left(e^{L(t_{j}-t_0)}-1\right)\right)
\\ =& n^{P+1}h_\Delta^{P+1}\frac{C}{L}\left(  (1+ h_jL)e^{L(t_{j}-t_0)}-1\right)
\\ \le&n^{P+1} h_\Delta^{P+1}\frac{C}{L}\left(  e^{h_jL}e^{L(t_{j}-t_0)}-1\right)
\\ = &n^{P+1} h_\Delta^{P+1}\frac{C}{L}\left(  e^{L(t_{j+1}-t_0)}-1\right)
\end{align}%
where  $(1+ h_jL)\le e^{h_jL}$  has been applied.
Again using $ \left(  e^{L(t_{n}-t_0)}-1\right) = (LH +(LH)^2+O(H^3))$,
\begin{equation}\label{}
\norm{\vec\epsilon_{\Delta}(t_{n})} = O(n^{P+1}h^{P+1}H) = O(H^{P+2})
\end{equation}
 is proven for $H=ch$  and $p>P$.\\

The preceding two results provide a solution for the recurrence scheme without restriction by relating $h$ and $H$ to each other, using  full equation \eqref{cutoff}: \begin{equation}\label{recurrenceFull}
\begin{aligned}
&(i)& \norm{\vec\epsilon_{\Delta}(t_0)} &= & 0 \\
&(ii)&\norm{\vec\epsilon_{\Delta}(t_{j+1})} &\le  &\tilde Ch_j^{p+1} + n^{P+1} Ch_j^{P+2} + (1 + h_jL)\norm{\vec\epsilon_{\Delta}(t_{j})}.
\end{aligned}
 \end{equation}
Now 
\begin{equation}\label{errBoundingFunction}
\norm{\vec\epsilon_{\Delta}(t_j)} 
\le \frac{1}{L}\left( \tilde Ch_{\Delta}^{p} +	Cn^{P+1}h_{\Delta}^{P+1}\right) \left(e^{L(t-t_0)}-1\right) \qquad	\forall t\le T_1
\end{equation}
is proven inductively. Assume it holds for $t_j$. Then
\begin{align}%
\norm{\vec\epsilon_{\Delta}(t_{j+1})} 
\le
&\tilde Ch_j^{p+1} + n^{P+1}Ch_j^{P+2} 
\\	& + (1+ h_jL) 
 \frac{1}{L}\left( \tilde C h_{\Delta}^{p+1} 
+	Cn^{P+1}h_{\Delta}^{P+1}\right)\left(e^{L(t_{j}-t_0)}-1\right)	\\
\le &
 \tilde C h_j^{p+1} + (1+ h_jL)	\frac{1}{L} \tilde C h_\Delta^{p+1} \left(e^{L(t_{j}-t_0)}-1\right)\label{recI}
\\&+ n^{P+1}Ch_j^{P+2} + (1+ h_jL)	\frac{1}{L}	Cn^{P+1}h_{\Delta}^{P+1}\left(e^{L(t_{j}-t_0)}-1\right)	 \label{recII}
\\ \le
&h_\Delta^{p}\frac{\tilde C}{L}\left(e^{L(t-t_0)}-1\right)
+n^{P+1} h_\Delta^{P+1}\frac{C}{L}\left(  e^{L(t_{j+1}-t_0)}-1\right) 
\\ =
&\left(h_\Delta^{p}\frac{\tilde C}{L}
+n^{P+1} h_\Delta^{P+1}\frac{C}{L}\right)\left(  e^{L(t_{j+1}-t_0)}-1\right). 
\end{align}%
Expression \eqref{recI} equals \eqref{recurrencepeqP} (ii) and so is estimated using \eqref{errBoundingFunctionpeqP},  and expression \eqref{recII} equals \eqref{recurrence} and has been estimated using \eqref{errBoundingFunction}. This finally is, as before,
\begin{equation}\label{consistencyGeneral}
\norm{\vec\epsilon_{\Delta}(t_{j+1})} 
\le \frac{1}{L}\left(h_\Delta^{p}\tilde C
+n^{P+1} h_\Delta^{P+1}C\right)H =O(h^p H) + O(H^{P+2}).
\end{equation}
In the case where the refinement of the exchange time grid does not (yet) influence $h$, this means, after the last refinement $H$ still is a multiple of $h$,
 the preceding result  applies with $P=0$, giving
 \begin{equation}
\norm{\vec\epsilon_{\Delta}(t_{n})} = O(H).
\end{equation}

\subsubsection{Global convergence}

Consistency of a cosimulation step
 for all subsystems in the cosimulation scheme is given by \eqref{consistencyGeneral}, 
 which is in fact estimating the error of the $i$th subsystems differential states, by reintroducing the subsystems index that could be ommitted in the previous section  $\vec\epsilon_{\Delta,i}(t_{j+1})=\vec x_{\mathcal D,i}(t_j+1)-\vec x_{\Delta, \mathcal D,i}(t_j+1)$,
\begin{equation}
\norm{\vec\epsilon_{\Delta,i}(t_{j+1})} 
\le \frac{1}{L}\left(\tilde C h_\Delta^{p}H
+C H^{P+2} \right).
\end{equation}
So for the consistency error $\norm{\vec\epsilon_{S}(T)}$ of all states of the system,
\begin{equation} \label{consistencySystem}
\norm{\vec\epsilon_{S}(T)} 
=\norm{\begin{pmatrix}\vec\epsilon_{\Delta,1}(t_n)
\\ \vdots
\\ \vec\epsilon_{\Delta,m}(t_n)
\end{pmatrix}}
\le \frac{1}{L}\left(\tilde{\tilde C} h_\Delta^{p}H
+\overline C H^{P+2} \right),
\end{equation}
so
the cosimulation method applied to overall system $S$ is \emph{consistent} according to definition \eqref{defConsistency} with respect to methods stepwith $H$. 

Using this, one proceeds in the straightforward way: The recurrence inequality now reads
\begin{equation}
\norm{\vec\epsilon_{\Delta,S}(T_{j+1})}\le \frac{1}{L}\left(\tilde{\tilde C} h_\Delta^{p}H +\overline C H^{P+2} \right)+ (1+HL)\vec\epsilon_{\Delta,S}(T_{j})
\end{equation}
where the index $S$ indicates that the error of whole system $S$ is considered. Calculation is performed in the appendix and is completely analogous to equations \eqref{splitIntoConsistencyAndPropagation}-\eqref{recurrenceI}.\\
The recurrence scheme is solved again analogously to the proof of \eqref{errBoundingFunction}:
\begin{equation}\label{convErrBoundingFunction}
\norm{\vec\epsilon_{\Delta,S}(T_{j+1})} 
\le \frac{1}{L}\left(\tilde{\tilde C}h_\Delta^{p}
+CH^{P+1}\right)\left(  e^{L(T_{j+1}-T_0)}-1\right)
\end{equation}
is proven inductively. Assume it holds for $t_j$. Then
\begin{align}%
\norm{\vec\epsilon_{\Delta,S}(T_{j+1})} 
\le
& \frac{1}{L}\left(\tilde{\tilde C} h_\Delta^{p}H +\overline C H^{P+2} \right) 
\\	& + (1+ HL) 
 \frac{1}{L}\left(\tilde{\tilde C}h_\Delta^{p}
+CH^{P+1}\right)\left(  e^{L(T_{j}-T_0)}-1\right)
\\ \le &
 \quad \tilde{\tilde C} h_\Delta^{p}H + (1+ HL)	\frac{1}{L} \tilde{\tilde C} h_\Delta^{p}\left(e^{L(T_{j}-T_0)}-1\right)\label{recIGlobal}
\\&+ \overline C H^{P+2} + (1+ HL)	\frac{1}{L}	\overline C H^{P+1}\left(e^{L(T_{j}-T_0)}-1\right)	 \label{recII}
\\ \le&
\quad \frac{1}{L}\tilde{\tilde C} h_\Delta^{p}\left(HL + (1+ HL) \right) \left(e^{L(t_{j}-T_0)}-1\right)
\\ &+ \frac{1}{L}	\overline C H^{P+1}  \left(HL + (1+ HL) \right)   \left(e^{L(t_{j}-T_0)}-1\right)
\\ \le
&\frac{1}{L}\left( \tilde{\tilde C} h_\Delta^{p}    + \overline C H^{P+1}  \right)\left(  e^{L(T_{j+1}-T_0)}-1\right). 
\end{align}%

By this result, refining the subsystems step size alone does not ensure convergence, and refinement of exchange step sizes alone while it does not bound $h$ has  an offset in error:
\begin{equation}\label{convergenceCosim}
\norm{\vec\epsilon_{\Delta}(T)} = 
\begin{cases}
O(H^{P+1}) & \text{ if } ch=H \text{ and } p>P\\
O(H^{p}) &\text{ if } ch=H \text{ and } p\le P\\\
O(1) & \text{ if $h$ and $H$ are independent} 
\end{cases}
\text{ in }[T_0, T_{end}).
\end{equation}
Thus unique rates might be less clearly observable as usual in practice due to the presence of  contributions of different order and of the two variables $H$ and $h$.
See section \ref{numResultsPure} for examples.

The above clear cases are consistent with the conclusions one can draw from  
 theorem \ref{consistencyToConvergence} using consistency orders 
\begin{equation}
\norm{\vec\epsilon_{\Delta}(t_n)} = 
\begin{cases}
O(H^{P+2}) & \text{ if } ch=H \text{ and } p>P\\
O(H^{p+1}) &\text{ if } ch=H \text{ and } p\le P\\\
O(H^1) & \text{ if $h$ and $H$ are independent.} 
\end{cases}
\text{ for } t_n \in [T_0, T_{1}).
\end{equation}
Moreover, this is  consistent with estimates given in \cite[Example 3]{ArnoldClaussSchierz2013}. Finally, the result is given as a 
\begin{theorem}\label{th:convergence}
Let S be a set of ODE which is split into disjoint subsystems $S_k$ of the shape 
\begin{equation}
\dot{\vec x}_{\mathcal D_k} = f(t,\left[{\vec x}_{\mathcal D_k},\vec x_{\mathcal I_k}\right] ),
\end{equation}
the $\mathcal D_k$ and $\mathcal I_k$ denoting index sets.
Let ${T_i}$ be a time grid with width $H$, and
let the inputs of all subsystems $\vec x_{\mathcal I_k}$ be extrapolated at $T_i$ with polynomial order $P$ and then be solved with an one-step method of order  $p$ and maximal stepwidth $h\le H$. Then for the error of the numerical solution the estimate 
\begin{equation}\label{convErrBoundingFunction}
\norm{\vec\epsilon_{\Delta,S}(T_{j+1})} 
\le \frac{1}{L}\left(\tilde{\tilde C}h_\Delta^{p}
+CH^{P+1}\right)\left(  e^{L(T_{j+1}-T_0)}-1\right)
\end{equation}
holds.
\end{theorem}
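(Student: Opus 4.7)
The plan is to mirror the classical convergence proof for one-step methods (Theorem \ref{consistencyToConvergence}), but to apply it at two nested levels: first at the subsystem grid $\{t_j\}$ inside a single exchange interval $[T_k, T_{k+1}]$ to obtain the consistency of one cosimulation step, and then at the exchange grid $\{T_k\}$ to lift that consistency to global convergence by a discrete Grönwall-type argument. The overall structure is exactly the one summarised in Figure~\ref{fig:SketchOfConvergence}, and at each level I would use the same three ingredients: a suitable splitting of the error, Lipschitz continuity of the relevant increment function, and a discrete Grönwall inequality solved by an exponential ansatz.

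At the inner level, I would fix $[T_0, T_1]$ and decompose the grid error $\vec\epsilon_\Delta(t_{j+1})$ into a local-error part $\vec\epsilon$ (comparing the exact flow $\Phi^{t_{j+1}, t_j}$ to the discrete evolution $\Psi^{t_{j+1}, t_j}$ fed with the extrapolated input) and a propagated-error part $\vec\epsilon_{\mathrm{Prop}}$. For the local part I would insert the auxiliary quantity $\Psi^{t_{j+1}, t_j}(\vec x_{\mathcal D}, \vec x_{\mathcal I})$ applied with the exact input, which splits $\vec\epsilon$ into (a) the usual method consistency $O(h^{p+1})$ and (b) a term bounded by $hL \norm{\vec x_{\mathcal I}(t_j) - \operatorname{Ext}(\vec x_{\mathcal I})(t_j)}$ via the Lipschitz continuity of $\psi$. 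The extrapolation error is $O((jh)^{P+1})$, and writing $(jh)^{P+1} = j^{P+1} h^{P+1}$ with $j \le n := H/h$ produces the combined factor $n^{P+1} h^{P+2}$. The propagated part contributes the familiar amplification factor $(1 + hL)$ by a further application of the Lipschitz property.

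Combining both pieces gives a recurrence of the schematic form $\norm{\vec\epsilon_\Delta(t_{j+1})} \le \tilde C h^{p+1} + C n^{P+1} h^{P+2} + (1 + hL)\norm{\vec\epsilon_\Delta(t_j)}$, which I would solve inductively with the ansatz $\frac{1}{L}(\tilde C h_\Delta^p + C n^{P+1} h_\Delta^{P+1})(e^{L(t - t_0)} - 1)$, using $1 + hL \le e^{hL}$ exactly as in \cite[Th. 4.10]{DeuflhardBornemann94}. Evaluating at $t = T_1$ and using $nh = H$ converts $n^{P+1} h^{P+1}$ into $H^{P+1}$, so the error after one cosimulation step is of order $h^p H + H^{P+2}$, which is the ``consistency of one cosimulation step'' in the sense of Definition~\ref{defConsistency} applied to the cosim method. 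With this in hand the outer level is structurally identical: I would perform the analogous error splitting on the exchange grid $\{T_k\}$, extract an amplification factor $(1 + HL)$, and solve the resulting recurrence by the same inductive Grönwall argument, which directly yields the claimed estimate \eqref{convErrBoundingFunction}.

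The main obstacle is the treatment of the factor $n^{P+1}$ at the inner level. If one naively sent $h \to 0$ with $H$ fixed, this factor would blow up; the estimate only becomes useful once one recognises that the combination $n^{P+1} h^{P+1} = (nh)^{P+1} = H^{P+1}$ absorbs $n^{P+1}$ into a benign power of the exchange step. This is why the hypothesis $h \le H$ and the careful case distinction between $p = P$ and $p > P$ (equations \eqref{cutoffpeqP}--\eqref{recurrenceFull}) are essential, and why a naked appeal to Theorem~\ref{consistencyToConvergence} on either level alone does not suffice. A related subtlety I would watch is the identity $\operatorname{Ext}(\vec x_{\Delta, \mathcal I}) = \operatorname{Ext}(\vec x_{\mathcal I})$ used in $\vec\epsilon_{\mathrm{Prop}}$: this depends on the extrapolation being built from the exact state at the start of the exchange interval, which keeps the propagated-error term a pure $\vec x_{\mathcal D}$-discrepancy and thus permits the clean amplification factor $(1 + hL_{\mathcal D})$ needed for the Grönwall step.
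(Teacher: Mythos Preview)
Your proposal is correct and follows essentially the same approach as the paper: the two-level structure (inner subsystem grid, outer exchange grid), the error splitting \eqref{splitIntoConsistencyAndPropagation} into a local part $\vec\epsilon$ and a propagated part $\vec\epsilon_{\mathrm{Prop}}$, the estimate \eqref{cutoff} via Lipschitz continuity of $\psi$ together with the extrapolation bound $O((jh)^{P+1})$, the recurrence \eqref{recurrenceFull}, and its inductive solution via $(1+hL)\le e^{hL}$ all match the paper's argument step by step. You have also correctly identified the two genuine subtleties the paper stresses: the absorption of $n^{P+1}h^{P+1}$ into $H^{P+1}$ and the use of $\operatorname{Ext}(\vec x_{\Delta,\mathcal I})=\operatorname{Ext}(\vec x_{\mathcal I})$ in bounding $\vec\epsilon_{\mathrm{Prop}}$.
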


\paragraph{Remark} For extrapolation of input data to a certain order, exact values and derivatives of it are required. Those
values and derivatives may depend on up-to-date input. If dependencies on input are circular, it thus may be impossible to get exact derivatives without some iterating procedure for solving the coupling equations \eqref{coupling1} and \eqref{coupling2}, as e.g. presented in \cite{KueblerSchiehlen2000}.  When estimating the consistency error (eq. \eqref{splitIntoConsistencyAndPropagation}), thus order loss may occur.

\subsection{Stability }\label{Stability}
Aim of this section is to examine the stability of the overall system.

 According to the established notions  of A- and B-stability in standard ODE theory, a method is called A- (resp. B-) stable if it conserves the stability properties of the ODE.\\
First, A-stability is a property of performance of methods on the scalar problem $\dot x = \lambda x$ and as such of no use for examination of coupling schemes. We
instead examine \emph{linear stability},
which shall be the conservation of linear vector valued problems
\begin{equation}
\dot{\vec x} = \ten B \vec x
\end{equation}
 stability by the method in question.

Consider a system $S$ governed by $\dot{\vec x} = \ten B \vec x$, which is split into  subsystems $S_i$ with variables $x_i$, $i=1,2$ 
 governed by 
\begin{align}
\dot x_1 = & B_{11}x_1 +B_{12}x_2 =: f_1(x_1,x_2)\\
\dot x_2 = & B_{22}x_2 +B_{21}x_1.
\end{align}
The last summands in this setting are inputs and in  previously used notation one would write
$(\vec x_{\mathcal D},\vec x_{\mathcal I})=(x_1, x_2)$. Cosimulation  method means
\begin{align}
\dot x_1 = & B_{11}x_1 +B_{12}\operatorname{Ext}(x_2) \label{eq:methodInducedLin} \\
\dot x_2 = & B_{22}x_2 +B_{21}\operatorname{Ext}(x_1).
\end{align}
The method is not stable if it does not preserve the stability of the ODE, i.e. if there is a matrix $\ten B$ with $\rho(\ten B)<1$ but 
\begin{equation}
\Psi^H\vec x_i >  \vec x_i.
\end{equation}
Choose $B_{11}=0$ and $B_{12}\neq 0$ but subject to $\rho(\ten B)<1$. Then the solution of the method induced ODE \eqref{eq:methodInducedLin} at $t_{k}\in [T_j,T_{j+1}]$ is 
\begin{equation}
x_1(t_{j}) =  0 +B_{12}\int_{T_j}^{t_{k}}\operatorname{Ext}(x_2)dt.
\end{equation}
For order $n$ extrapolation the integral  is a polynomial of degree $n+1$,  for constant extrapolation that is $x_2t$ - the solution of the method induced ODE such is not stable for any extrapolation, so the numerical solution cannot be stable for any subsystems integration scheme.\\
We state  that from B-stability linear  stability follows:  Consider above linear ODE,  which shall be stable, and thus $\operatorname{Re}(\rho(\ten B))\le 0$. From these properties $\scalar{\ten B x}{x}\le 0$, which is  $\scalar{\ten B(y_1-y_2)}{(y_1-y_2)}\le 0$ and so the dissipativity. So the discrete solution of a  B-stable method applied to a stable linear ODE will be stable.
As linear stability is not given for explicite cosimulation schemes, explicite cosimulation is neither B-stable. \\
These results could be expected, if one sees cosimulation methods as mixed explicit-implicit methods.\\




\section{Convergence of balance correction methods}\label{sec:convergenceBC}

Balance correction methods  seek to establish conservation of an input quantity $\vec x_{ \mathcal I}$ by adding its extrapolation error to the input at a later timestep, when it can be calculated. Denoting 
\begin{equation}
\operatorname{bal}_j:= g(t, t_j,h)\epsilon_{B, j}
\end{equation}
where 
\begin{equation}
\epsilon_{B, j} = \int\limits_{t_{j-1}}^{t_j}(\vec x_{\Delta,\mathcal I } - \operatorname{Ext}(\vec x_{\Delta ,\mathcal I}))(\tau) \,d\tau
\end{equation}
and
$g(t, t_i,h)$ is a smooth function with a support beginning at $t_i$, extending to 1-4 data exchange intervals and fulfilling $\int g(t, t_i,h)\,dt = 1$, the general one-step balance correction method reads 
 \begin{equation}\begin{split}
 \Psi^{t_{j+1},t_j}_\text{corr}(\left[\vec x_{ \mathcal D}, \operatorname{Ext}(\vec x_{ \mathcal I})\right](t_j)) 
 \\= \left[\vec x_{ \mathcal D}, \operatorname{Ext}(\vec x_{ \mathcal I})\right](t_j)
 + h\psi\left(t_j,\left[\vec x_{ \mathcal D}, \operatorname{Ext}(\vec x_{ \mathcal I}) + \operatorname{bal}_j\right],h \right).
\end{split}
\end{equation}

Our examination now follows exactly the arguments outlined  in the introduction of \ref{convergence} and
treats the extrapolation error contribution in the method as an input error and shows that this input error does not spoil validity of any of the arguments used.

\subsection{Consistency of subsystem step inside $\left[T_{j},T_{j+1}\right)$}

One may  split $\epsilon_{\Delta }(t_j)$   for a balance correction method according to \eqref{splitIntoConsistencyAndPropagation} and further, such that 
\begin{multline}%
\epsilon_{\Delta }(t_j) 
=\vec x_{\mathcal D}(t_{j+1})  - \Psi^{t_{j+1},t_j}_\text{corr}(\left[\vec x_{\Delta, \mathcal D}, \operatorname{Ext}(\vec x_{\Delta, \mathcal I})\right](t_j))
\\=\underbrace{\vec x_{\mathcal D}(t_{j+1})  
-\Psi^{t_{j+1},t_j}(\left[\vec x_{ \mathcal D}, \operatorname{Ext}(\vec x_{ \mathcal I})\right](t_j))}_{\vec \epsilon} 
\\ + \underbrace{\Psi^{t_{j+1},t_j}(\left[\vec x_{ \mathcal D}, \operatorname{Ext}(\vec x_{ \mathcal I})\right](t_j))
-\Psi^{t_{j+1},t_j}(\left[\vec x_{ \Delta\mathcal D}, \operatorname{Ext}(\vec x_{ \Delta,\mathcal I})\right](t_j))}_{\vec \epsilon_\text{Prop}}
\\+\underbrace{\Psi^{t_{j+1},t_j}(\left[\vec x_{ \Delta\mathcal D}, \operatorname{Ext}(\vec x_{ \Delta,\mathcal I})\right](t_j))
- \Psi^{t_{j+1},t_j}_\text{corr}\left(\left[\vec x_{ \Delta, \mathcal D}, \operatorname{Ext}(\vec x_{ \Delta,\mathcal I})\right](t_j)\right) }_{\vec \epsilon_\text{bal}}.
\end{multline}
\begin{itemize}
\item The first difference  $\epsilon$ is the same as in equation \eqref{splitIntoConsistencyAndPropagation}, thus named like it and thus  yields
\begin{multline}%
\vec \epsilon(t_{j+1}) = 
  \vec x_{\mathcal D}(t_{j+1})	
-\Psi^{t_{j+1},t_j}(\left[\vec x_{ \mathcal D}, \operatorname{Ext}(\vec x_{ \Delta,\mathcal I})\right](t_j))
\\= h\left\{ \psi\left(t_j,\left[\vec x_{ \mathcal D}, \vec x_{ \mathcal I}\right](t_j), h\right)
- 
\psi\left(t_j,\left[\vec x_{ \mathcal D}, \operatorname{Ext}(\vec x_{ \Delta, \mathcal I})\right](t_j), h\right)\right\}\\
+ r(h^{p+1}),
\end{multline}%
which then is estimated analogously to \eqref{cutoff} as 
\begin{equation}
\norm{\epsilon} \le n^{P+1}O(h^{P+2})+ \norm{r(h^{p+1})}.
\end{equation}
\item Expression $\vec \epsilon_\text{Prop}$   is the propagated error again and is  treated as in equation \eqref{propagatedError}  using the Lipschitz continuity. The fact that  consistency as required for theorem \ref{consistencyToConvergence} is $\Phi^{t,t+h}\vec x-\Psi^{t,t+h}\vec x=O(h^{p+1})$ justifies the use of  $\operatorname{Ext}(\vec x_{  \mathcal I}) = \operatorname{Ext}(\vec x_{ \Delta, \mathcal I})$ here again. This results in  equation \eqref{recurrenceI} again:
\begin{multline*}%
\norm{\vec\epsilon_\text{Prop}(t_{j+1})}
=\norm{\Psi^{t_{j+1},t_j}(\left[\vec x_{ \mathcal D}, \operatorname{Ext}(\vec x_{ \mathcal I})\right](t_j))
-\Psi^{t_{j+1},t_j}(\left[\vec x_{ \Delta\mathcal D}, \operatorname{Ext}(\vec x_{ \Delta,\mathcal I})\right](t_j))}
 \\ = (1 + hL_{ \mathcal D})\norm{\vec\epsilon_{\Delta}(t_{j})}\qquad \forall j<n.
\end{multline*}%

\item Finally, $\vec \epsilon_\text{bal}$ becomes 
\begin{multline}
\vec \epsilon_\text{bal}=\Psi^{t_{j+1},t_j}(\left[\vec x_{ \Delta,\mathcal D}, \operatorname{Ext}(\vec x_{ \Delta,\mathcal I})\right](t_j))
- \Psi^{t_{j+1},t_j}_\text{corr}\left(\left[\vec x_{ \Delta, \mathcal D}, \operatorname{Ext}(\vec x_{ \Delta,\mathcal I})\right](t_j)\right)
\\=\left[\vec x_{\Delta, \mathcal D}, \operatorname{Ext}(\vec x_{\Delta, \mathcal I})\right](t_j)
 + h\psi\left(t_j,\left[\vec x_{ \Delta,\mathcal D}, \operatorname{Ext}(\vec x_{ \Delta,\mathcal I})\right],h \right)
\\- 
\left\{\left[\vec x_{\Delta, \mathcal D}, \operatorname{Ext}(\vec x_{\Delta, \mathcal I})\right](t_j)
 + h\psi\left(t_j,\left[\vec x_{ \Delta,\mathcal D}, \operatorname{Ext}(\vec x_{ \Delta,\mathcal I}) + \operatorname{bal}_j\right],h \right)\right\}
\\ \le h L\norm{\left[\vec x_{ \Delta,\mathcal D}, \operatorname{Ext}(\vec x_{ \Delta,\mathcal I})\right] 
- \left[\vec x_{ \Delta,\mathcal D}, \operatorname{Ext}(\vec x_{ \Delta,\mathcal I}) + \operatorname{bal}_j\right]}
\\ = h L\norm{\left[0, \operatorname{bal}_j\right]}.
\end{multline}
Mind that $\operatorname{bal}_j$ is part of the method and has to be considered during  consistency analysis even if otherwise  $\operatorname{Ext}(\vec x_{  \mathcal I}) = \operatorname{Ext}(\vec x_{ \Delta, \mathcal I})$ is assumed on $[T_0,T_1]$.
By the Cauchy-Schwarz inequality
\begin{multline}
\norm{g(t,t_j,h) \int\limits_{t_{j-1}}^{t_j}(\vec x_{\Delta,\mathcal I } - \operatorname{Ext}(\vec x_{\Delta ,\mathcal I}))(\tau) \,d\tau}
\\ \le 1 h \max_{t\in [t_{j-1}, t_j)}(\left|\vec x_{\Delta,\mathcal I } - \operatorname{Ext}(\vec x_{\Delta ,\mathcal I})\right|(t))
\\ = hO(jh^{P+1}) \le  n^{P+1}O(h^{P+2}).
\end{multline}
\end{itemize}
Together,
\begin{equation}
\vec \epsilon_\text{bal}\le h L n^{P+1}O(h^{P+2})= n^{P+1}O(h^{P+3}).
\end{equation}
If $H$ while $h$ remains constant, this is $H^{P+1}$, if $h$ is reduced while $n$ constant, this is $h^{P+3}$, so higher order compared to $\vec \epsilon$ in any case.
The recurrence scheme can be set up the same way as in section \ref{convergence},
\begin{equation}
\begin{aligned}
&(i)& \norm{\vec\epsilon_{\Delta}(t_0)} &= & 0 \\
&(ii)&\norm{\vec\epsilon_{\Delta}(t_{j+1})} &\le & n^{P+1}O(h^{P+2}) + (1 + h_jL)\norm{\vec\epsilon_{\Delta}(t_{j})}
\\& & &\le & \tilde Ch_j^{p+1} + n^{P+1} Ch_j^{P+2} + (1 + h_jL)\norm{\vec\epsilon_{\Delta}(t_{j})}
\end{aligned}
 \end{equation}
 with the same result \eqref{errBoundingFunction} 
\begin{equation}
\norm{\vec\epsilon_{\Delta}(t_j)} 
\le \frac{1}{L}\left( \tilde Ch_{\Delta}^{p+1} +	Cn^{P+1}h_{\Delta}^{P+1}\right) \left(e^{L(t-t_0)}-1\right) \qquad	\forall t\le T_1
\end{equation}
 and same implications \eqref{consistencyGeneral} for consistency and  \eqref{convergenceCosim} for global convergence:
 \begin{equation}\label{convergenceBC}
\norm{\vec\epsilon_{\Delta}(t)} = 
\begin{cases}
O(H^{P+1}) & \text{ if } ch=H \text{ and } p>P\\
O(H^{p}) &\text{ if } ch=H \text{ and } p\le P\\\
O(1) & \text{ if $h$ and $H$ are independent} 
\end{cases}
\text{ in }[T_0, T_{end}).
\end{equation}
Finally, the result is formulated as 
\begin{theorem}\label{th:convergenceBC}
Let $S$, $S_k$, ${\vec x}_{\mathcal D_k}$, $\vec x_{\mathcal I_k}$ and ${T_i}$ be as in theorem \ref{th:convergence}, but balance correction contributions be added to the extrapolated variables.
Then the estimate from  theorem \ref{th:convergence} still holds.
\end{theorem}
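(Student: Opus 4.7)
The plan is to follow the same three-stage architecture used to prove Theorem \ref{th:convergence}: establish a consistency-like estimate on a single subsystem step inside $[T_j, T_{j+1})$, feed it into a recurrence for the local error, and then apply Theorem \ref{consistencyToConvergence} on the cosimulation grid. The novelty is that the discrete evolution is now $\Psi^{t_{j+1},t_j}_\text{corr}$ rather than $\Psi^{t_{j+1},t_j}$, so I need to show that the extra balance term $\operatorname{bal}_j$ does not introduce a contribution of lower order than the ones already controlled in the proof of Theorem \ref{th:convergence}.

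First, I would split the subsystem step error as
\begin{equation*}
\epsilon_\Delta(t_{j+1}) = \vec\epsilon + \vec\epsilon_\text{Prop} + \vec\epsilon_\text{bal},
\end{equation*}
where $\vec\epsilon$ and $\vec\epsilon_\text{Prop}$ are defined exactly as in \eqref{splitIntoConsistencyAndPropagation} and $\vec\epsilon_\text{bal}$ is the difference between the uncorrected and the corrected discrete evolution applied to the same numerical state. The first two terms are bounded by reusing \eqref{cutoff} and \eqref{recurrenceI} verbatim; no change is needed since the definition of $\vec\epsilon_\text{Prop}$ still only involves $\operatorname{Ext}$, and the identification $\operatorname{Ext}(\vec x_{\mathcal I}) = \operatorname{Ext}(\vec x_{\Delta,\mathcal I})$ near $t_j$ is still legitimate for the consistency part.

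The central estimate is the bound on $\vec\epsilon_\text{bal}$. Using the Lipschitz continuity of the increment function $\psi$ in its input argument, $\vec\epsilon_\text{bal}$ is controlled by $hL\|[0,\operatorname{bal}_j]\|$. By the normalization $\int g\,dt = 1$ together with Cauchy–Schwarz (or simply $\|g\|_\infty \le 1/h$ combined with the integral), one has $\|\operatorname{bal}_j\| \le \max_{\tau\in[t_{j-1},t_j]} |\vec x_{\Delta,\mathcal I} - \operatorname{Ext}(\vec x_{\Delta,\mathcal I})|(\tau)$, which is exactly of order $(jh)^{P+1} = n^{P+1}O(h^{P+1})$ inside $[T_0,T_1)$. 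This yields $\|\vec\epsilon_\text{bal}\| \le n^{P+1} O(h^{P+3})$, i.e.\ strictly higher order in $h$ than the already-present $n^{P+1}O(h^{P+2})$ contribution from $\vec\epsilon$, so it can be absorbed into the constant in front of the dominating term.

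With this bound in hand, the recurrence scheme has the same form as \eqref{recurrenceFull}:
\begin{equation*}
\|\vec\epsilon_\Delta(t_{j+1})\| \le \tilde C h_j^{p+1} + n^{P+1} C h_j^{P+2} + (1 + h_j L)\|\vec\epsilon_\Delta(t_j)\|,
\end{equation*}
only with adjusted constants. From here I would apply the induction argument already carried out for \eqref{errBoundingFunction}, obtaining the same bounding function and, via Theorem \ref{consistencyToConvergence} on the exchange-time grid, the same global convergence result \eqref{convergenceCosim}. The main obstacle is the estimate of $\operatorname{bal}_j$: it requires being careful about the smoothing function $g$ (whose support may span several exchange intervals), and about the fact that $\operatorname{bal}_j$ is a property of the method and must be kept inside the consistency analysis even under the $\operatorname{Ext}(\vec x_{\mathcal I}) = \operatorname{Ext}(\vec x_{\Delta,\mathcal I})$ convention otherwise used on $[T_0,T_1)$.
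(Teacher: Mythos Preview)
Your proposal is correct and follows essentially the same route as the paper: the same three-way split $\vec\epsilon+\vec\epsilon_{\text{Prop}}+\vec\epsilon_{\text{bal}}$, the same Lipschitz estimate $\norm{\vec\epsilon_{\text{bal}}}\le hL\norm{[0,\operatorname{bal}_j]}$, the same observation that $\operatorname{bal}_j$ must be kept inside the consistency analysis, and the same recurrence \eqref{recurrenceFull} with merely modified constants. One small slip: with $\|g\|_\infty\le 1/h$ your own argument yields $\norm{\vec\epsilon_{\text{bal}}}\le n^{P+1}O(h^{P+2})$, not $O(h^{P+3})$ (the paper obtains the extra power of $h$ by bounding $|g|\le 1$ rather than $1/h$); either way the term is absorbed into the constant of the dominating contribution, so the conclusion is unaffected.
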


 This results can be interpreted in the following way: If we regard the balance correction as an errorous disturbation, it still is too small to disturb convergence.\\
 It should be possible to show that for a sufficiently small macro time step size, the error of a cosimulation method with balance correction is smaller than one without balance correction.

\subsection{Numerical results}\label{numResultsPure}
\subsubsection{Linear Problem for convergence examination} 
For examination of convergence, a two-component linear system split into two subsystems  of one variable each is solved, which written in FMI style reads:\\[11pt]
\begin{tabular}{p{0.22\textwidth} | p{0.22\textwidth}} 
\multicolumn{2}{ c }{General scheme}\tabularnewline
\center{$S_1$} & \center{$S_2$}\tabularnewline
\multicolumn{2}{ c }{System States}\tabularnewline
\center{$\vec x_1  $} 		&\center{$\vec x_2 $} \tabularnewline
\multicolumn{2}{ c }{Outputs}\tabularnewline
\center{$\vec y_1  $} 		&\center{$\vec y_2 $} \tabularnewline
\multicolumn{2}{ c }{Inputs}\\
\center{$\vec u_1=\vec y_2$}	& \center{$\vec u_2 = \vec y_1$}\tabularnewline
\multicolumn{2}{ c }{Equations}\tabularnewline
\center{$\dot {\vec x_1} = \vec f_1(\vec x_1, \operatorname{Ext}(\vec u_1))	$} & \center{$\dot{ \vec x_2} = \vec f_2(\vec x_2, \operatorname{Ext}(\vec u_2))$}	\\
\end{tabular}
\begin{tabular}{p{0.24\textwidth} | p{0.24\textwidth}} \multicolumn{2}{ c }{$\dot{\vec x} = \ten A \vec x$}\tabularnewline
\center{$S_1$} & \center{$S_2$}\tabularnewline
\multicolumn{2}{ c }{System States}\tabularnewline
\center{ $ x_1  $ } 		&\center{$ x_2 $} \tabularnewline
\multicolumn{2}{ c }{Outputs}\tabularnewline
\center{$ y_1:= x_{1}  $} 		&\center{$ y_2:= x_{2} $} \tabularnewline
\multicolumn{2}{ c }{Inputs}\\
\center{$  u_1:=  y_2$}	& \center{$ u_2 :=  y_1$}\tabularnewline
\multicolumn{2}{ c }{Equations}\tabularnewline
\center{$\dot { x_1} = a_{1,1}x_1 + a_{1,2}\operatorname{Ext}( u_1) $} & \center{$\dot{  x_2} = a_{2,2}x_2 + a_{2,1} \operatorname{Ext}(u_2) $}	\\
\end{tabular}\\[12pt]
 Efficiently, the coupled equations are:
\begin{equation}\label{LinProb}
\dot{\vec x} = \ten A \vec x \quad \Rightarrow \quad 
\begin{cases}
\dot x_1 = a_{1,1}x_1 + a_{1,2}\operatorname{Ext}(x_2) 
\\ \dot x_2 = a_{2,2}x_2 + a_{2,1}\operatorname{Ext}(x_1). 
\end{cases}
\end{equation}
The matrix entries are chosen such that 
\begin{itemize}
\item an unidirectional dependency on input is given: $a_{11},a_{21},a_{22}\neq 0$, $a_{12}=0$,
\item a mutual dependency is given: $a_{12},a_{21}\neq 0$, $a_{11}=a_{22}=0$.
\end{itemize}
In the unidirectional flow of data case, one expects the convergence rates from \eqref{convErrBoundingFunction}  to become observable. 
In the example where the data flows in both directions, the linear extrapolation of input data case is an example for a problem where the coupling equations \eqref{coupling1}-\eqref{coupling2} are not exactly fulfilled when calculating the $\vek u_i$ and where one would have to apply a method to make outputs consistent as discussed in section
\ref{sec_consistentInput};
it might illuminate the effects of output dependency on past-timestep input data: In this case, the output row in above scheme reads\\
\begin{center}
\begin{tabular}{p{0.27\textwidth} | p{0.27\textwidth}} 
\multicolumn{2}{ c }{$\vdots$}\tabularnewline
\multicolumn{2}{ c }{Outputs}\tabularnewline
\center{$ \vec y_1:= [x_{1},  \dot x_{1}]$} 		&\center{$ \vec y_2:= [x_{2} ,\dot x_{2}]$} \tabularnewline
\multicolumn{2}{ c }{Inputs}\\
\center{$\vec u_1:=\vec y_2$}	& \center{$\vec u_2 := \vec y_1$}\tabularnewline
\multicolumn{2}{ c }{$\vdots$}\tabularnewline
\end{tabular}\end{center}
and as
\begin{equation}
\begin{split}
\dot x_i(t) = a_{i,i}x_i(t) + a_{i,j}\operatorname{Ext}(x_j)(t),\\  \operatorname{Ext}(x_j)(t)=x_j(T_N)+\dot x_j(T_N)(t-T_N)
\\=x_j(T_N) + (a_{j,j}x_j(T_N) + a_{j,i}\operatorname{Ext}(x_i)(T_N))(t-T_N),
\end{split}
\end{equation}
and either $\operatorname{Ext}(x_i)(T_N)$ or $\operatorname{Ext}(x_j)(T_N)$ in the corresponding equation of the subsystem $j$  has to be the extrapolant from $(T_{N-1},T_N]$,  one expects a bigger error for this problem. On the other hand, with $\operatorname{Ext}^{(T_{N-1},T_N]}(x_i)(T_N)$ denoting the extrapolant for the interval given in superscript,
\begin{equation}
\operatorname{Ext}^{(T_{N},T_N+1]}(x_i)(t)
=\operatorname{Ext}^{(T_{N-1},T_N]}(x_i)(t)+(t-T_{N-1})\left(\dot x_i(T_{N-1})
+\partial_ux_i\dot u_i  \right)
\end{equation}
and so an order loss is not to be expected.\\
\begin{figure}
\includegraphics[ width=0.8\textwidth]{\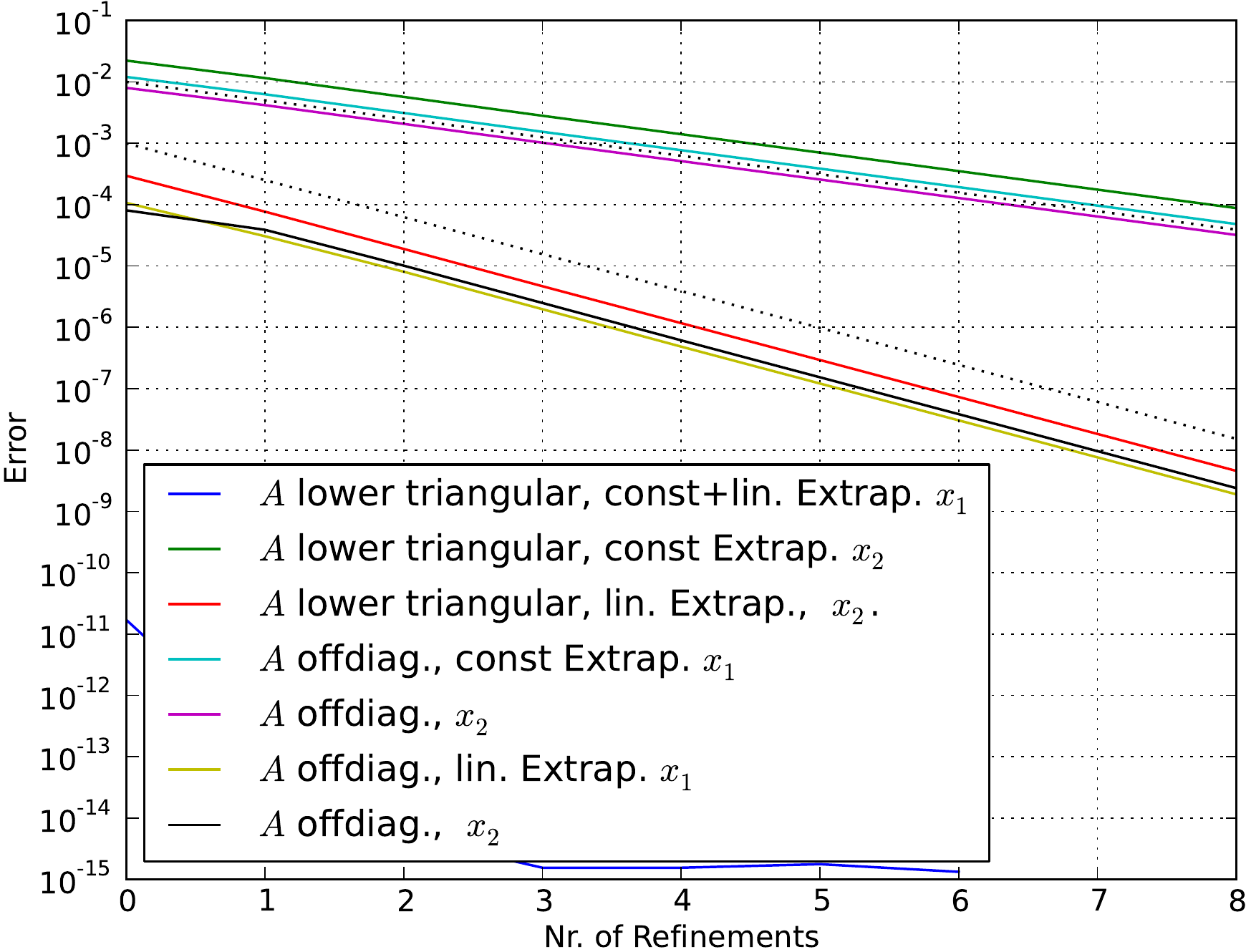}
\caption{\label{CosimConvergencePlot} Simulation of the system \eqref{LinProb} with matrix $A\in \mathbb R^2$ realizing one-directional coupling (lower triangular shape) resp. mutual coupling (only offdiagonal entries are nozero) in the cosimulation scheme with constant and linear extrapolation, varying the exchange step size $H$.  }
\end{figure}
Figure \ref{CosimConvergencePlot} shows the convergence result for the four situations.   To show the predictions made by \eqref{convErrBoundingFunction} in terms of $H$, it is necessary that the subsystems methods contribution $\tilde{\tilde C} h_\Delta^{p}$ is of higher order than the extrapolation and that the method used is a one-step method, which  will be discussed in Section \ref{pitfallMultistep}. Thus \emph{dopri5}, an explicite Runge-Kutta method, was chosen, using the built-in stepsize control with default absolute tolerance $10^{-12}$.\\
The figure shows that convergence is of order 1 for constant extrapolation and of order 2 for linear extrapolation, as predicted by \eqref{convErrBoundingFunction}. As discussed, there is no order loss for linear extrapolation and circular dependency of inputs, but not even an higher error, in spite of the negative effects that should occur.\\ 
The error of the first component of the lower triangular, thus unidirectionally coupled system is very low as it has no extrapolation contribution, thus indicating that the error made by $dopri$ is low enough to allow for judgement of the effect of extrapolation error.

\subsubsection{Spring-Mass system for convergence and stability} 
\label{springMassConvergence}
To numerically examine the stability of the method, we chose the linear spring-mass oscillator as discussed in  \cite{ScharffMoshagen2017}  and given by 
\begin{equation} \label{springMass}
\dot{ \vec{ x}}
= \ten A\vec{x} 
= \begin{pmatrix}
0 & 1\\
-\frac{c}{m} & \left(-\frac{d}{m}\right)
 \end{pmatrix} \vec x, 
 \qquad \vec{x} = 
 \begin{pmatrix}
x\\
\dot x
 \end{pmatrix}
\end{equation}
with mass $m$, spring constant $c$, and in which the damping constant $d$ shall vanish, for being simple while still showing stability problems. It is indeed a physical interpretation of the above system with offdiagonal matrix.\\ 
\begin{table}[h!tb]
\begin{tabular}{c|c} 
Spring  & Mass\tabularnewline
\multicolumn{2}{ c }{System States}\tabularnewline
 $ x_1 := s =  x $ 		&  $ x_2 := v = \dot x $ \tabularnewline
\multicolumn{2}{ c }{Outputs}\tabularnewline
$ y_1:=F =-cx  $		&$ y_2:= v = \dot x $ \tabularnewline
\multicolumn{2}{ c }{Inputs}\\
$  u_1:=  y_2$	& $ u_2 :=  y_1$\tabularnewline
\multicolumn{2}{ c }{Equations}\tabularnewline
\parbox[t]{0.24\textwidth}{ \begin{equation*}\begin{split}\dot { x_1} = \operatorname{Ext}( u_1) = v\end{split}\end{equation*}	} 
& \parbox[t]{0.24\textwidth}{\begin{equation*}\begin{split}\dot{  x_2} = -\frac{1}{m} \operatorname{Ext}(u_2) \\= -\frac{F}{m}\end{split}\end{equation*}}	\\
\end{tabular}
\begin{tabular}{c|c}
Spring & Mass\tabularnewline
\multicolumn{2}{ c }{System States}\tabularnewline
 	$\vdots$	&  $\vdots$\tabularnewline
\multicolumn{2}{ c }{Outputs}\tabularnewline
\parbox[t]{0.24\textwidth}{$ \vec y_1:=(f,\dot f)$\\ $=(-cx, -cv) $ } 		&\parbox[t]{0.24\textwidth}{ $\vec y_2:=( v,a)$ \\$= (\dot x, \dot f/m )$} \tabularnewline
\multicolumn{2}{ c }{Inputs}\\
$  u_1:=  y_2$	& $ u_2 :=  y_1$\tabularnewline
\multicolumn{2}{ c }{Equations}\tabularnewline
 $\vdots$&$\vdots$	\tabularnewline
 &\tabularnewline
 &
\end{tabular}
\caption{\label{CosimSchemesSpringMass}Cosimulation Schemes for the spring-mass system, left constant, right linear extrapolation}
\end{table}
This system was treated  in the cosimulation scheme \ref{CosimSchemesSpringMass}.
Output of the spring is the force $F = -cx$, 
that of the mass is the velocity $v = \dot x$. As ODE solver on subsystems, any solver that does not dominate the convergence and stability behavior of the cosimulation scheme could be used. The plots \ref{ConstCosimConvergencePlot} and \ref{LinCosimConvergencePlot} show simulations done with \emph{vode} and \emph{zvode} from the \emph{numpy} Python numerics library, which both implement implicit Adams method if problem is nonstiff and BDF if it is and behave stable due to their step size adjustment. The convergence plot \ref{ConvergencePlot}  was made with \emph{dopri5} (see Section \ref{pitfallMultistep}). \\
\begin{figure}
\center{\includegraphics[ width=0.75\textwidth]{\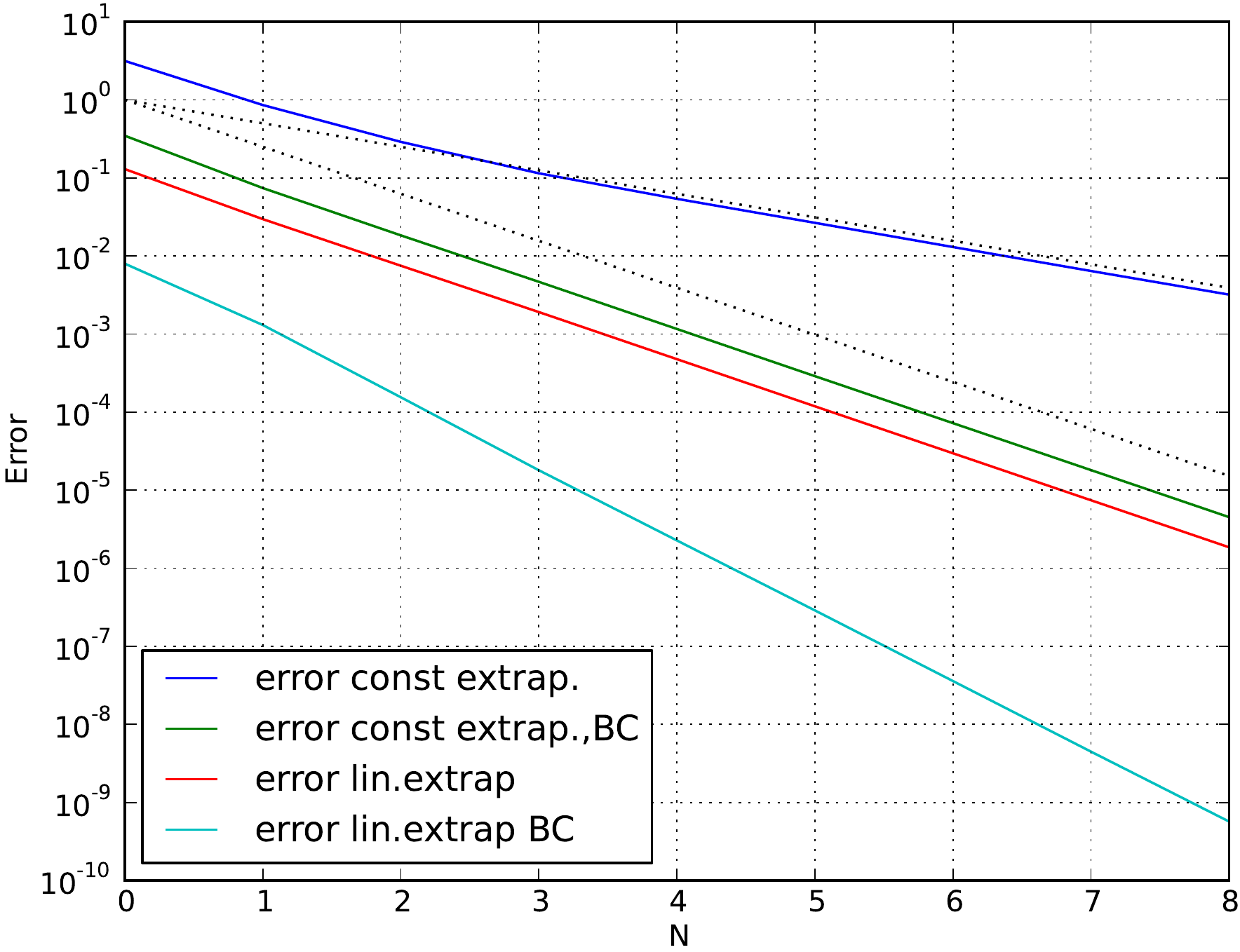}
}
\caption{\label{ConvergencePlot} Convergence of the system \eqref{springMass} in the cosimulation scheme, varying the exchange step size $H = 0.2\left(\frac{1}{2}\right)^N$.    Convergences as predicted are achieved - even more, balance corrected scheme for this problem converges of one order higher than proven.}
\end{figure}
The numerical convergence examination backs up the results from section \ref{convergence}, estimate \eqref{convErrBoundingFunction},  and equation \eqref{convergenceBC} -- even more, balance corrected scheme for this problem converges of one order higher than proven there. A sharper theoretical result should be achievable.

\begin{figure}
\includegraphics[ width=0.5\textwidth]{\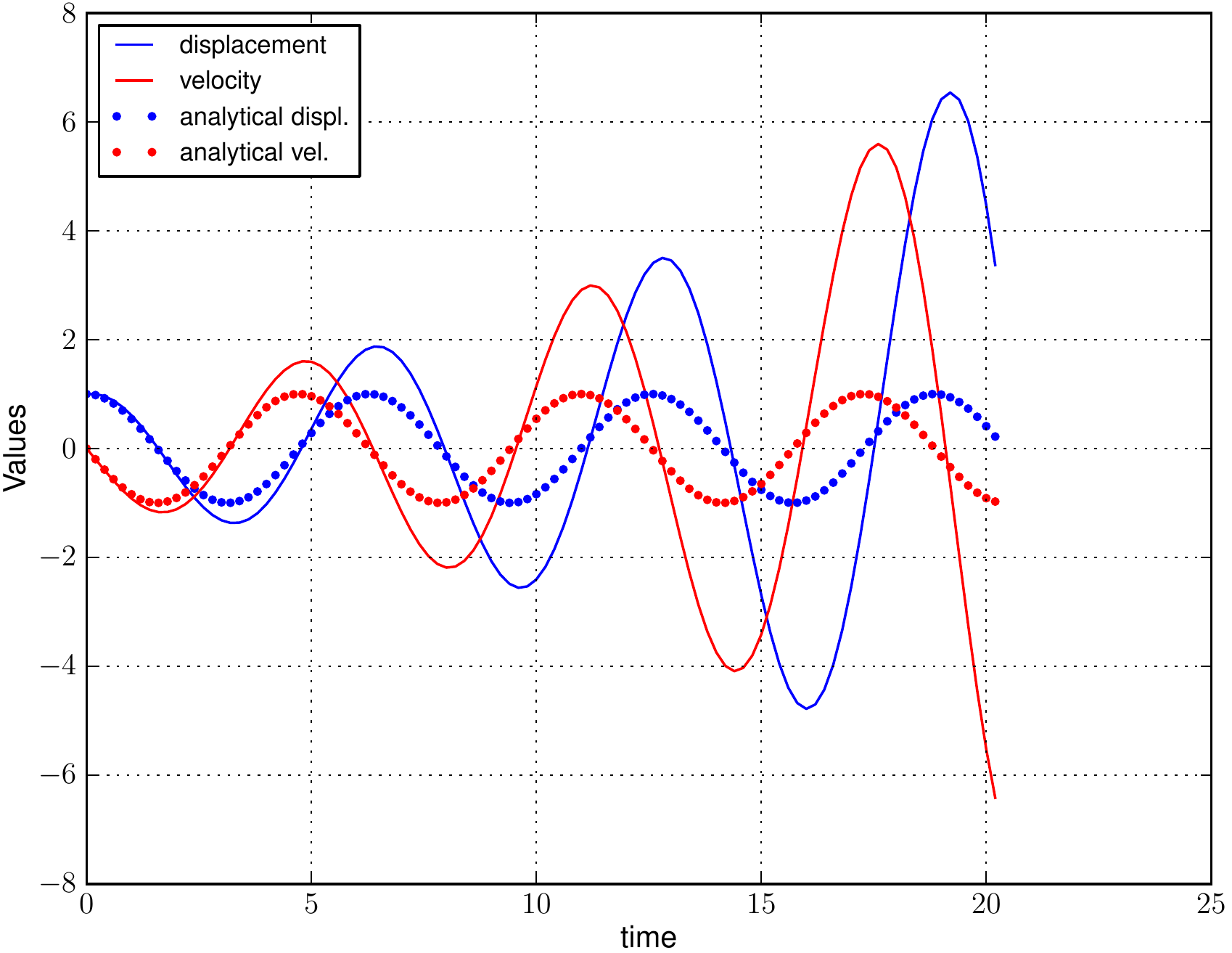}
\includegraphics[ width=0.5\textwidth]{\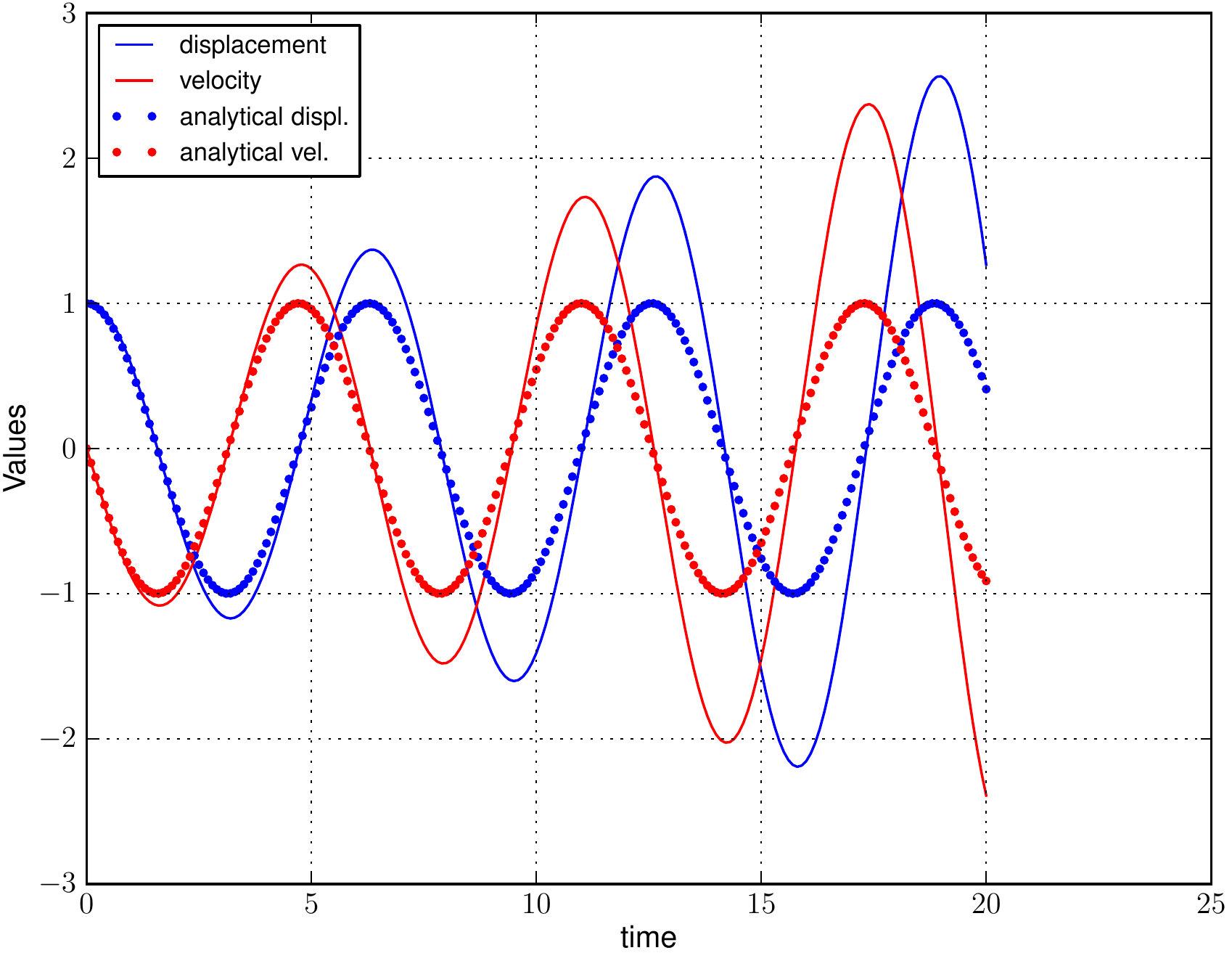}\\
\includegraphics[ width=0.5\textwidth]{\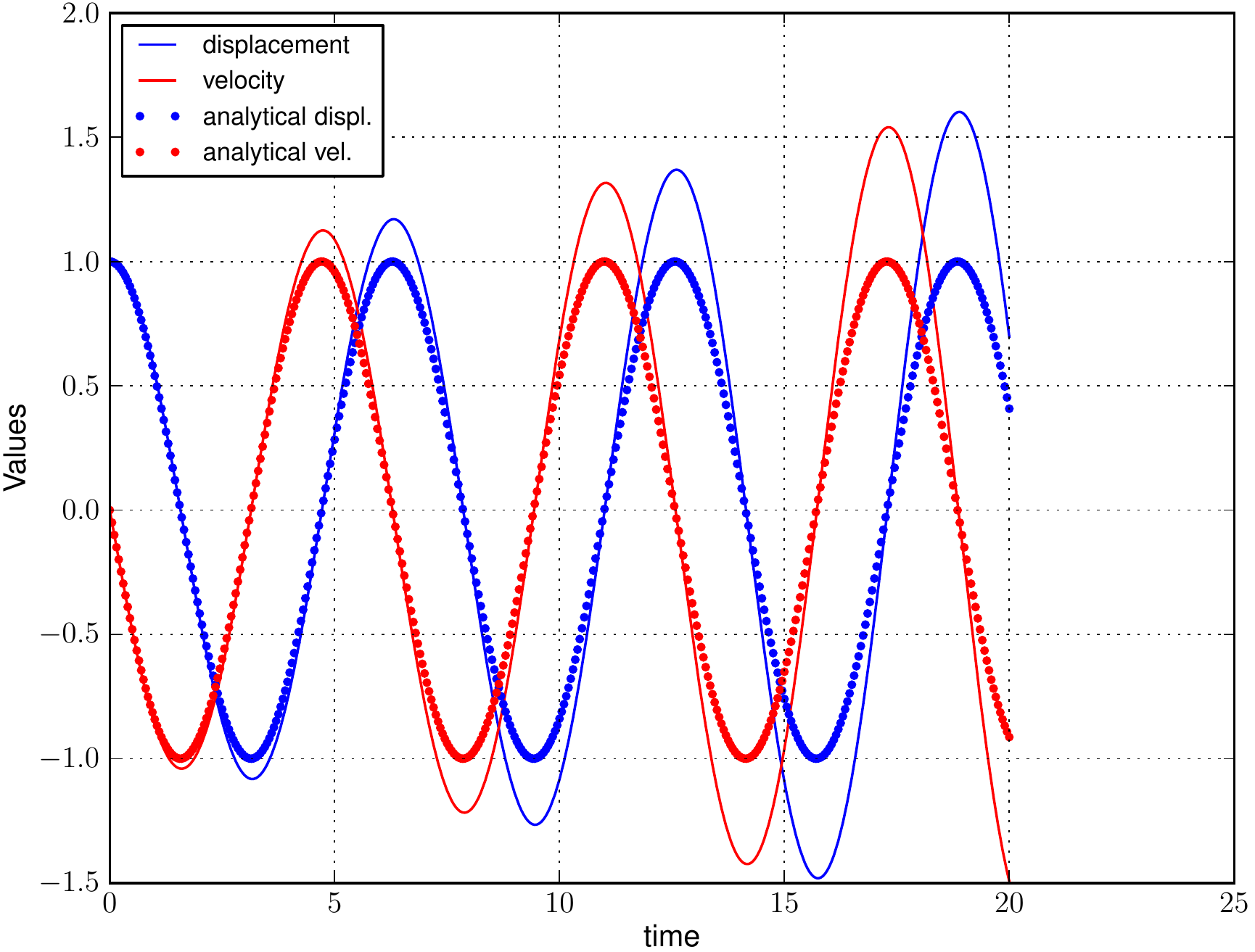}
\includegraphics[ width=0.5\textwidth]{\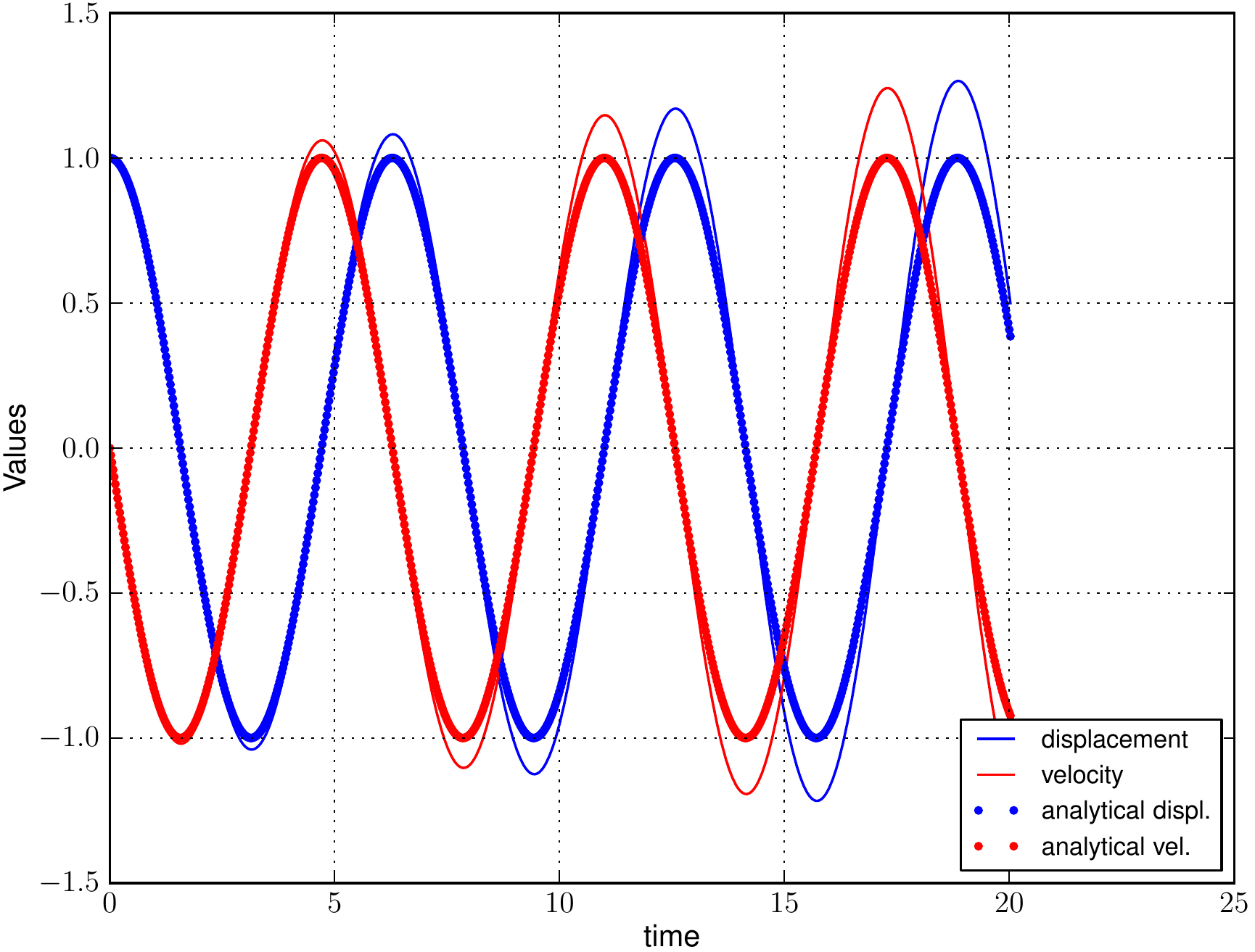}\\
\caption{\label{ConstCosimConvergencePlot} Simulation of the system \eqref{springMass} in the cosimulation scheme with constant extrapolation, varying the exchange step size $H$.  Upper row, left: $H = 0.2$, right:  $H = 0.1$, lower row: left: $H = 0.05$, right:  $H = 0.025$. Convergence of order $H$ is given, but there is no stability for any step size in sight. Figure previously published in \cite{ScharffMoshagen2017}.}
\end{figure}
\begin{figure}
\includegraphics[ width=0.5\textwidth]{\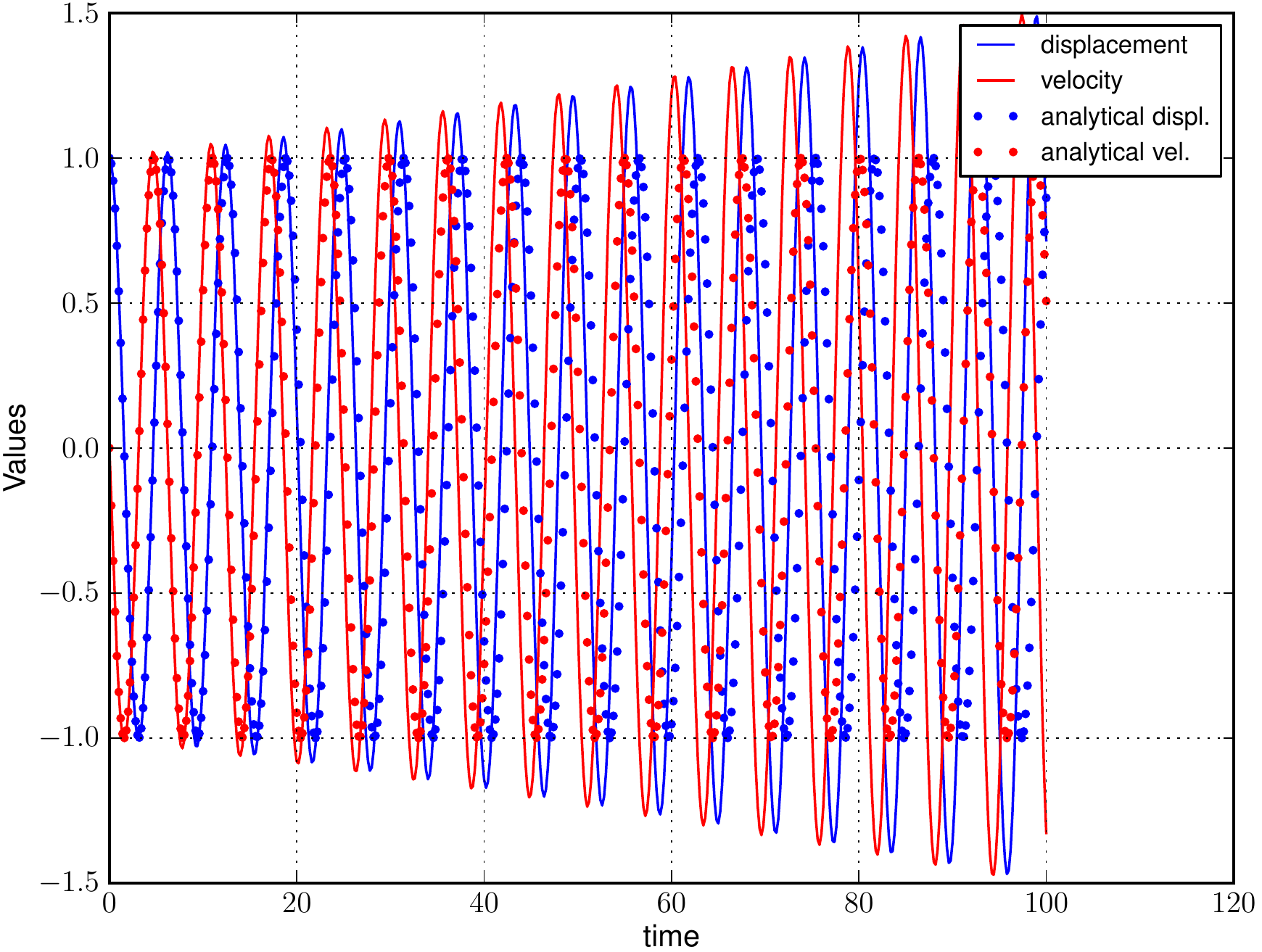}
\includegraphics[ width=0.5\textwidth]{\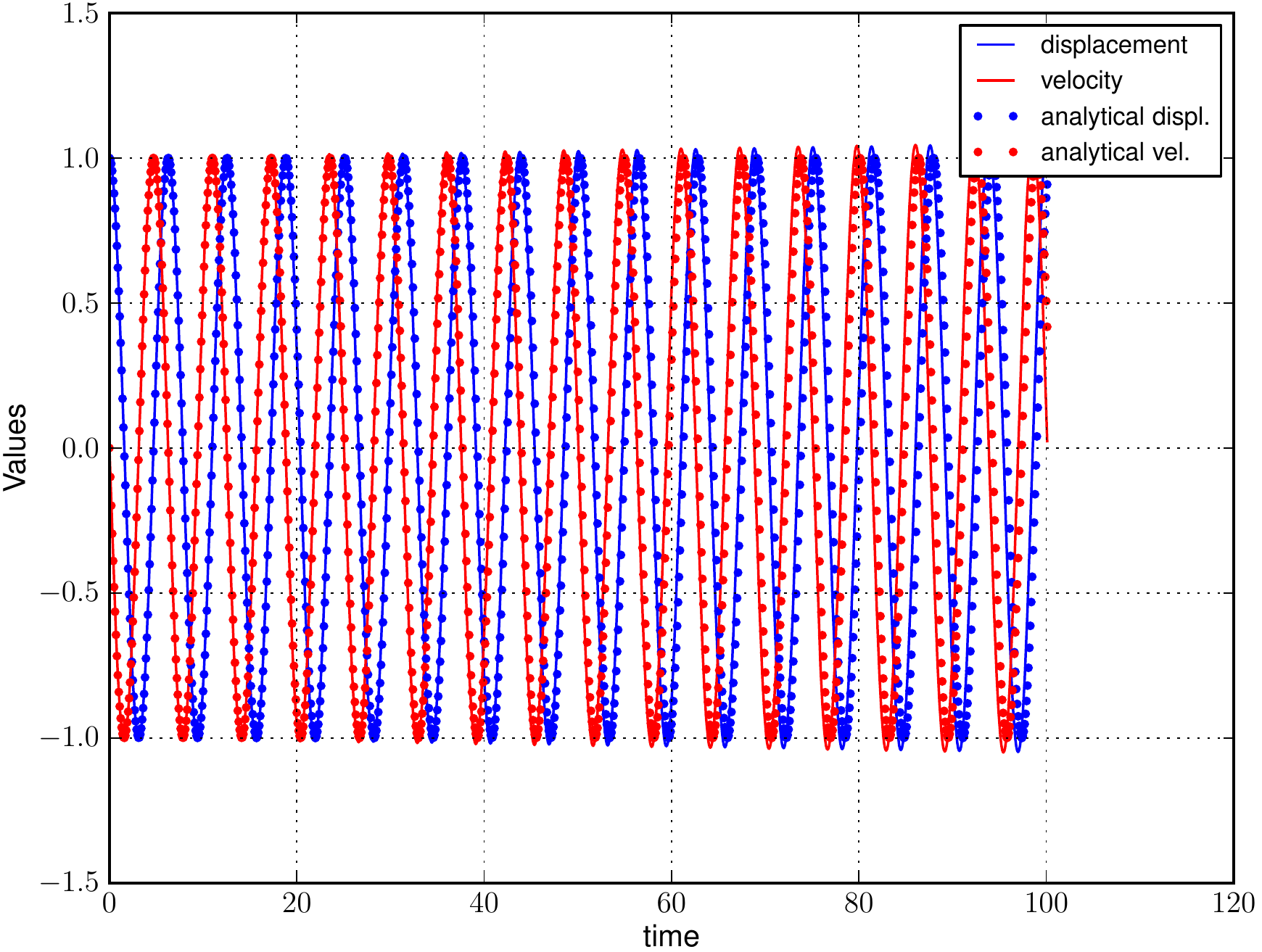}\\
\caption{\label{LinCosimConvergencePlot} Simulation of the system \eqref{springMass} in the cosimulation scheme with linear extrapolation, varying the exchange step size $H$.   Left: $H = 0.2$, right:  $H = 0.1$. $H = 0.05$   $H = 0.025$ are not shown because error is not visible in plot. Convergence of order $H^2$ is given, but as in the constant extrapolation case there is no stability. Left figure previously published in \cite{ScharffMoshagen2017}.}
\end{figure}

 But  the method is unstable for its explicite contributions, as proven in section \ref{Stability}. This means $\norm{\vec x}\longrightarrow \infty$ for $t\longrightarrow \infty$. The energy of our system is $E=\frac{1}{2}m v^2+ \frac{1}{2}cs^2=\scalar{\vec x}{\vec x}_{\frac{1}{2}\operatorname{diag}(m,c)}$, which is an equivalent norm, so lack of stability is equivalent to energy augmentation. \\  
This lack of stability can be interpreted in physics \todo{Really always this?} as a consequence of extrapolation errors in factors of power acting on subsystems boundaries:
 In \cite{ScharffMoshagen2017} the problem arose 
that errors in the force $y_1$ made during data exchange lead to errors in the power that acts on the mass. The system picks up energy and behaves unstable (See figure \ref{ConstCosimConvergencePlot}). This led to a reclassification to balance errors: Cumulated extrapolation errors in input variables that in fact are conserved quantities, and those in input variables that are factors of conserved quantities and such disturb a balance \cite[Section 3.2]{ScharffMoshagen2017}.


\subsubsection{Pitfall 
Subsystems methods}
\label{pitfallMultistep}
\begin{figure}
\includegraphics[ width=0.8\textwidth ]
{\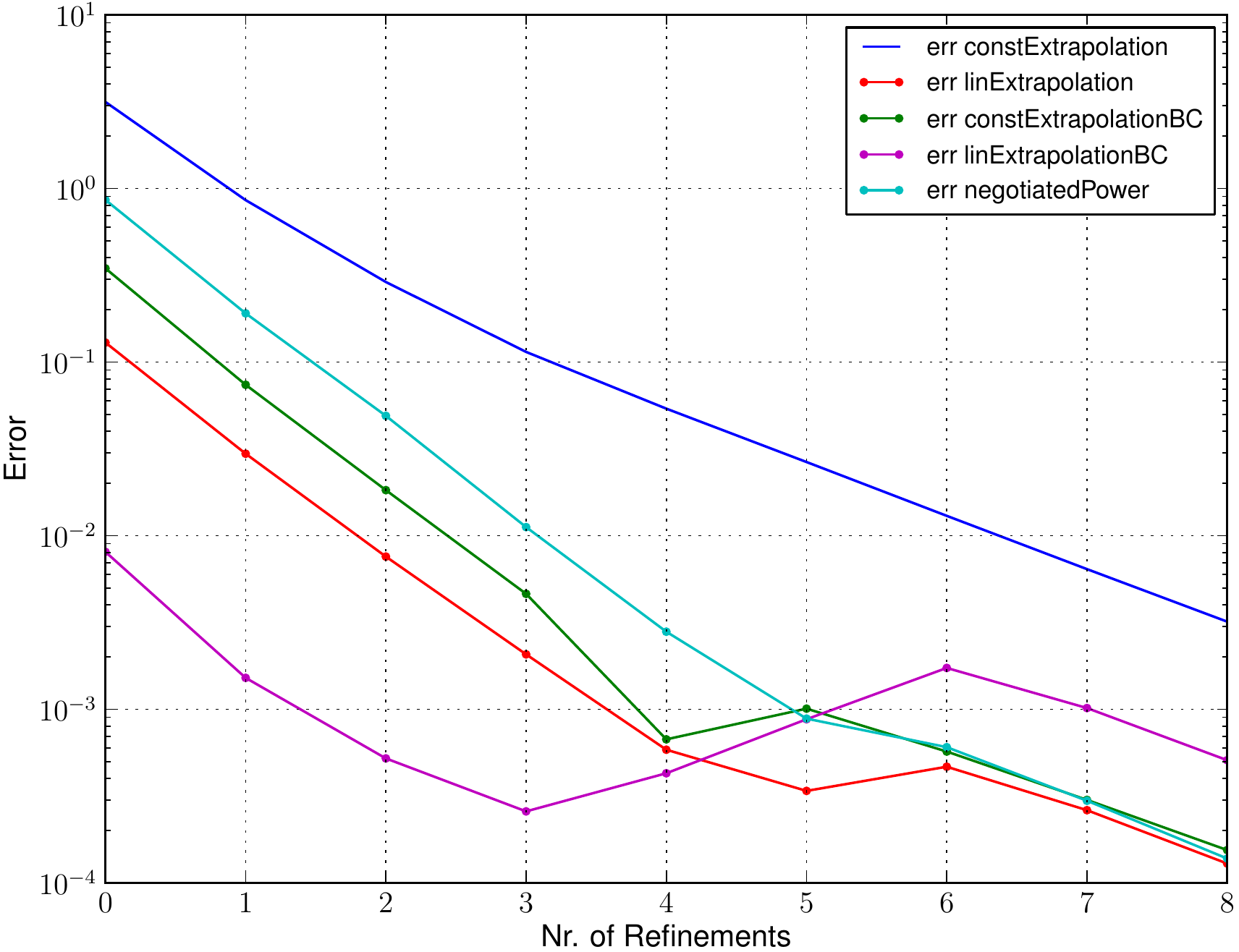}
\caption{\label{comparisonPlotVode} Convergence, $T_\text{end}=20$, subsystems refinement decisions left to subsystems solvers, \emph{vode} used on subsystems. In this setting, extrapolation error dominates at the beginning, then multistep methods initial value solvers convergence rate spoils convergence rate.   }
\end{figure}
Multistep methods need additional initial values: an $n$-step methods needs $n$ of them. Those initial values are usually calculated using one-step methods, possibly of lower order. Its error usually is not visible in the result, as it occurs only once in the calculation. But as subsystem solver restarts after data exchange, the error occurs $N$ times in cosimulation. Assume that the one-step method is of order $p'$. Then even if $H=ch$ holds, sum of its  error is $NCh^{p'}=NHCh^{p'-1}/c=(T_N-T_0)Ch^{p'-1}=O(h^{p'-1})$. In figure \ref{comparisonPlotVode}, this is $0(h)$, as visible for the last two refinements. The local minimum in error is due to different signs of the two contributions $O(h^{p'-1})$ and $O(H^{P+1})$.

\section{Discussion and Conclusion}

\label{sec:conclusion}
By the estimates given in  section \ref{convergence}, theorem \ref{th:convergence} 
 cosimulation methods are   proven to be efficient methods, if the overall system is not too vulnerable to stability problems (see Section \ref{Stability}).
The stability issue will be tackled in a separate publication.\\
The convergence result is consistent with those in \cite{ArnoldClaussSchierz2013} and in \cite{ArnoldGuenther2001}, but include the error of the subsystems methods. 
 Moreover, with the provided methodology it can be proven that balance correction method converges at least with the same convergence order as the cosimulation scheme without balance correction, see theorem \ref{th:convergenceBC} from  section \ref{sec:convergenceBC}. For most problems it should be the case that  convergence order  rises by one if  balance correction is applied, as the test problems and superficial considerations indicate.\\
 Finding a  sharper theoretical result for balance correction techniques seems a realistic future task.

\providecommand{\bysame}{\leavevmode\hbox to3em{\hrulefill}\thinspace}
\providecommand{\MR}{\relax\ifhmode\unskip\space\fi MR }
\providecommand{\MRhref}[2]{%
  \href{http://www.ams.org/mathscinet-getitem?mr=#1}{#2}
}
\providecommand{\href}[2]{#2}

\end{document}